\newcommand{\N}{\mathbb N}
\newcommand{\Z}{\mathbb Z}
\newcommand{\supp}{\mathrm{supp}}
\newcommand{\defin}[1]{\textbf{\textit{#1}}}
\newcommand{\inv}{^{-1}}
\newcommand{\Stab}{\mathrm{Stab}}
\newcommand{\Sub}{\mathrm{Sub}}
\newcommand{\onto}{\twoheadrightarrow}
\newtheorem{thm}{Theorem}[section]
\newtheorem*{thm*}{Theorem}
\newtheorem*{classthm*}{Classification Theorem}
\newtheorem{cor}[thm]{Corollary}
\newtheorem{lem}[thm]{Lemma}
\theoremstyle{definition}
\newtheorem{claim}{Claim}
\newtheorem*{claim*}{Claim}
\newenvironment{cproof}{\begin{proof}[Proof of the 
		claim]}{\end{proof}}
\newtheorem{qu}[thm]{Question}
\newtheorem{df}[thm]{Definition}
\title[Amen. wreath prod. with non almost finite actions]{Amenable wreath products with non almost finite actions of mean dimension zero}
\author{Matthieu Joseph}
\address{Université Paris-Saclay, CNRS, Laboratoire de mathématiques d’Orsay, 91405, Orsay}
\email{matthieu.joseph@universite-paris-saclay.fr}
\subjclass[2020]{37A15, 37B05, 20E08, 20E26, 20E15, 46L35}
\begin{document}

\maketitle

\begin{abstract}
    Almost finiteness was introduced in the seminal work of Kerr as an dynamical analogue of $\mathcal{Z}$-stability in the Toms-Winter conjecture. In this article, we provide the first examples of  minimal, topologically free actions of amenable groups that have mean dimension zero but are not almost finite. More precisely, we prove that there exists an infinite family of amenable wreath products that admit topologically free, minimal profinite actions on the Cantor space which fail to be almost finite. Furthermore, these actions have dynamical comparison. This intriguing new phenomenon shows that Kerr's dynamical analogue of Toms-Winter conjecture fails for minimal, topologically free actions of amenable groups.
	
	The notion of allosteric group holds a significant position in our study. A group is allosteric if it admits a minimal action on a compact space with an invariant ergodic probability measure that is topologically free but not essentially free. We study allostery of wreath products and provide the first examples of allosteric amenable groups.
\end{abstract}

\maketitle

\tableofcontents

\section{Introduction}

Topological dynamics and $\mathrm{C}^*$-algebra theory have a long intertwined story with the former offering a wide source of meaningful examples for the latter via the notion of crossed product. In the last decade, a major breakthrough occurred in the ambitious classification program for simple separable nuclear $\mathrm{C}^*$-algebras that was launched by Elliott in the $1980$s. This program, which proposes to classify a certain class of $\mathrm{C}^*$-algebras using $K$-theoretic and tracial data has somehow come to a conclusion with the following classification theorem built on the work of many researchers.

\begin{classthm*}[\protect{\cite[Cor.~D]{classification}}]  Unital, separable, simple, nuclear, $\mathcal{Z}$-stable $\mathrm{C}^*$-algebras satisfying the Universal Coefficient Theorem are classified by their Elliott invariant.
\end{classthm*}
It has naturally become an essential task to find large classes of $\mathrm{C}^*$-algebras that are covered by the classification theorem. Crossed products arising from topological dynamics are typical candidates: if $\Gamma\curvearrowright X$ is a continuous action on a compact metric space, then the reduced crossed product $C(X)\rtimes_r \Gamma$ is unital and separable, is simple if and only if $\Gamma\curvearrowright X$ is topologically free and minimal \cite{ArchboldSpielberg}, is nuclear if and only if the action $\Gamma\curvearrowright X$ is amenable \cite{Anantharaman}. In the case where $\Gamma$ is amenable, then the action $\Gamma\curvearrowright X$ is automatically amenable. Moreover, if $\Gamma\curvearrowright X$ is amenable, then $C(X)\rtimes_r\Gamma$ satisfies the Universal Coefficient Theorem \cite[Prop.~10.7]{Tu}. In order to understand whether such crossed products are covered by the classification theorem, it remains to understand their $\mathcal{Z}$-stability.

Even though $\mathcal{Z}$-stability for an amenable action of a non-amenable group is a stimulating question which has received attention recently (see for instance \cite{PICP} and \cite{PICP2}) we focus in the present paper on actions of amenable groups. In this context, several techniques with a dynamical flavor have been developped recently to prove $\mathcal{Z}$-stability. 
 
Mean dimension (as defined by Gromov) appears to be a meaningful invariant in this context. This invariant, associated to any topological action, measures how the dimension grows asymptotically and is known to be zero when the space is finite dimensional or when the action has zero entropy. For a minimal action of $\Z$ on a compact metric space $X$, mean dimension zero implies that $C(X)\rtimes\Z$ is $\mathcal{Z}$-stable, as a combination of \cite{TW} when $X$ is finite dimensional and \cite{ElliottNiu} when $X$ is infinite dimensional. Conversely, Giol and Kerr exhibit examples of minimal $\Z$-actions of mean dimension nonzero whose associated crossed product is not $\mathcal{Z}$-stable \cite{GiolKerr}. More recently, Niu proved that for any free minimal action of $\Z^d$ on a compact metric space $X$, mean dimension zero implies $\mathcal{Z}$-stability of $C(X)\rtimes\Z^d$ \cite{Niu}. For minimal, topologically free actions of amenable groups, mean dimension zero is conjectured to be the suitable setting in which $\mathcal{Z}$-stability holds. 

Another fruitful technique for $\mathcal{Z}$-stability is the recent dynamical notion of \emph{almost finiteness} due to Kerr \cite{Kerr}. Almost finiteness is defined for group actions on compact metric spaces as a sort of topological version of the Ornstein-Weiss tower theorem in measurable dynamics. It was first used in \cite{CJKMSTD} to prove that the crossed product associated with any almost finite free minimal action on the Cantor space is $\mathcal{Z}$-stable. Kerr was then able to remove the zero-dimensional assumption on the space: he proved that crossed products associated with free minimal almost finite actions of amenable groups are $\mathcal{Z}$-stable \cite[Thm.~12.4]{Kerr} and therefore are covered by the classification theorem. A large class of group actions are covered by this theorem: by a recent result of Kerr and Naryshkin, any free minimal action of an elementary amenable group on a finite-dimensional compact metric space is almost finite \cite{KerrNaryshkin}, see also \cite{naryshkin2023group} for an even larger class of group actions. The main result of this article is to provide the first examples of minimal, topologically free actions of amenable groups on zero dimensional spaces (and therefore of mean dimension zero) that are not almost finite. 

\begin{thm}\label{thm.nonalmostfinite}
Let $\Lambda$ be a finitely generated torsion free nilpotent group and let $d\geq 1$. Then $\Z^d\wr\Lambda$ admits minimal, topologically free, profinite actions on the Cantor space that are not almost finite.  
\end{thm}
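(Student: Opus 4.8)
The plan is to decouple the statement into a soft ``obstruction'' principle and a hands-on construction. The obstruction is the following general fact, which I would isolate as a lemma: if $\Gamma$ is amenable and $\Gamma \curvearrowright X$ is a minimal \emph{almost finite} action, then it is essentially free with respect to every invariant probability measure $\mu$ (i.e.\ $\mu$-a.e.\ stabiliser is trivial). Granting this, the theorem reduces to producing, for $\Gamma = \Z^d \wr \Lambda$, a minimal, topologically free, profinite action carrying an ergodic invariant measure that is \emph{not} essentially free --- in other words to proving that these groups are allosteric through profinite actions. Such an action cannot be almost finite, which is exactly the conclusion. Since $\Lambda$ is nilpotent, hence amenable, $\Gamma$ is amenable, so the lemma applies and the example lives in the intended class.

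To prove the lemma I would argue with Kerr's castles directly. Fix $g \neq e$ and $\eps>0$, and apply almost finiteness to $K = \{e,g\}$ to obtain a clopen castle with $(K,\eps)$-invariant shapes $S_i$ and bases $V_i$ whose levels $\{sV_i : s\in S_i\}$ are disjoint and cover $X$ up to a remainder $R$ that is subequivalent to a subset of small multiplicity; for an invariant measure this forces $\mu(R)$ small. If $x = sv \in sV_i$ with $s$ in the interior $\{s\in S_i : gs \in S_i\}$ and $gx=x$, then $gsv \in (gs)V_i$ meets $sV_i$, so $gs=s$ and $g=e$, a contradiction. Hence $\Fix(g)$ meets the castle only along the boundary levels, whose measure is at most $\eps$ by $(K,\eps)$-invariance, giving $\mu(\Fix(g)) \leq \eps + \mu(R)$. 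Letting $\eps \to 0$ yields $\mu(\Fix(g))=0$, and since $\{x:\Stab(x)\neq\{e\}\} = \bigcup_{g\neq e}\Fix(g)$ is a countable union, the action is essentially free. This step I expect to be routine.

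The substance is the construction. Write $N = \bigoplus_{\Lambda}\Z^d$ for the base, so $\Gamma = N \rtimes \Lambda$. Using residual finiteness of the finitely generated nilpotent group $\Lambda$, fix a chain $\Lambda = \Lambda_0 > \Lambda_1 > \cdots$ of finite-index normal subgroups with $\bigcap_n \Lambda_n = \{e\}$ and indices growing fast enough that $\sum_n [\Lambda:\Lambda_n]^{-1} < \infty$. I would build a compatible chain of finite-index subgroups $\Gamma_n \leq \Gamma$ projecting onto $\Lambda/\Lambda_n$ and with $K_n := \Gamma_n \cap N$ of the \emph{window} form $K_n = \{f : \textstyle\sum_{\lambda \in B_n} f(\lambda) \equiv 0 \bmod m_n\}$ for a well-chosen finite $B_n \subseteq \Lambda$ and modulus $m_n \to \infty$; crucially $B_n$ is \emph{not} $\Lambda$-invariant, so $K_n$ is not normalised by $\Lambda$. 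The resulting profinite action $X = \varprojlim \Gamma/\Gamma_n$ is minimal and uniquely ergodic. The key computation, using that $N$ is abelian and normal, is that for a point $x$ with $\Lambda$-coordinates $(\mu_n)$ one has $\Stab(x)\cap N = \bigcap_n \mu_n \cdot K_n$, so that membership of a single-site element $w\delta_e$ (with $w\neq 0$) reduces to the orbit of the site $e$ \emph{avoiding} the windows $B_n$ at every level.

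From here the two required properties become a single balancing act on the windows. Choosing $B_n$ to be a vanishing fraction of $\Lambda/\Lambda_n$ (single points suffice, with $\sum_n [\Lambda:\Lambda_n]^{-1} <\infty$) makes the avoidance event have positive measure by a Borel--Cantelli / martingale estimate along the tower, so that for a positive-measure set of $x$ the copy $\Z^d \delta_e$ lies in $\Stab(x)$; this yields a non-trivial ergodic stabiliser $\IRS$, hence the action is \emph{not} essentially free. Simultaneously, arranging the $B_n$ to sweep $\Lambda$ cofinally --- so that every site is caught at arbitrarily deep scales, regardless of the conditioning from earlier levels --- forces $\Fix(v)$ to have empty interior for every $0 \neq v \in N$; together with the fact that any $g$ with non-trivial $\Lambda$-part acts freely on the $\hat\Lambda$-factor and so has no fixed points, this gives topological freeness. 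The main obstacle is precisely reconciling these two demands: the windows must be \emph{sparse} enough (summable density) to leave a positive-measure set of permanently avoiding points, yet \emph{recurrent and separating} enough to detect every non-zero base element on every cylinder. Producing an explicit lacunary placement of the $B_n$ achieving both --- and checking the near-independence across levels that the positive-measure estimate needs --- is the technical heart, and is where the fast-growing index chain afforded by the nilpotent structure of $\Lambda$ is used.
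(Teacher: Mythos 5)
Your overall architecture coincides with the paper's: the same obstruction lemma (almost finiteness forces essential freeness with respect to every invariant measure, which is Lemma \ref{lem.afnonessentiallyfree}), the same reduction to producing profinite allosteric actions of $\Z^d\wr\Lambda$ (Theorem \ref{thm.allosterywreath}), and essentially the same construction via ``window'' congruence subgroups in the base $\bigoplus_\Lambda\Z^d$. But there are two genuine gaps. First, a well-definedness problem: for $\Gamma_n=K_n\rtimes\Lambda_n$ to be a subgroup, $K_n$ must be invariant under the shift action of $\Lambda_n$, and this fails for a finite window $B_n\subseteq\Lambda$ with a single congruence $\sum_{\lambda\in B_n}f(\lambda)\equiv 0 \bmod m_n$ (it is also in tension with your emphasis that $B_n$ be non-invariant). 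The paper's fix is to take a \emph{normal} finite-index $\Lambda_\gamma\trianglelefteq\Lambda$ and impose one congruence per coset $q\in E\subseteq\Lambda/\Lambda_\gamma$, i.e.\ $A_\gamma=\{f\colon \sum_{\lambda\in q}f(\lambda)\in(p_\gamma\Z)^d \text{ for all } q\in E\}$; normality of $\Lambda_\gamma$ makes $A_\gamma$ invariant under $\Lambda_\gamma$ while leaving it non-normal in $\Gamma$, which is exactly what allostery needs.

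Second, and more seriously, the positive-measure avoidance estimate --- which you yourself identify as the technical heart --- is deferred to an unverified ``near-independence across levels,'' and for a nested chain the levels are genuinely not independent. The paper resolves this with a specific arithmetic device that your sketch lacks: the subgroups are indexed by elements $\gamma\in\Gamma\setminus\{e_\Gamma\}$ rather than by a chain, with $[\Gamma:\Gamma_\gamma]$ a power of a prime $p_\gamma$, the primes pairwise distinct and chosen so that $\Lambda$ is residually $p_\gamma$-finite (this is where Gruenberg's theorem on torsion-free nilpotent groups enters) and $g(\supp(g))\cap(p_\gamma\Z)^d=\varnothing$. Coprimality of the indices then yields $\Gamma/\bigcap_{\gamma\in F}\Gamma_\gamma\cong\prod_{\gamma\in F}\Gamma/\Gamma_\gamma$ as $\Gamma$-sets, so the fixed-coset proportions multiply \emph{exactly} and $\prod_\gamma(1-\eps_\gamma)>0$ gives a positive-measure set of points whose stabilizer contains the copy $S$ of $\Z^d$ at $e_\Lambda$ (Theorem \ref{thm.allosterycriterion}). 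Your chain version is in fact salvageable without independence --- badness at level $n$ pulls back along the tower with constant fibers, so a union bound with $\sum_n\eps_n<1$ suffices --- but some such argument must actually be supplied. Note also that the paper obtains topological freeness much more cheaply than your ``cofinal sweeping'' of windows: since $\gamma\notin\Gamma_\gamma$ for every $\gamma$, the single point formed by the identity cosets has trivial stabilizer, and minimality together with the $G_\delta$ property of the free part makes it comeager. Finally, a small point on your lemma proof: in the zero-dimensional definition of almost finiteness used here the tower levels partition $X$ exactly, so the remainder $R$ you introduce does not occur.
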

  
Here an action $\Gamma\curvearrowright X$ on a zero-dimensional compact metric space $X$ is \defin{profinite} if the orbits of the action of $\Gamma$ on the set $\mathrm{Clo}(X)$ of clopen subsets of $X$ are all finite. We refer to Section \ref{sec.criterion} for a precise definition of profinite actions. 

The wreath products covered by Theorem \ref{thm.nonalmostfinite} are all elementary amenable, so by contrast, all their free minimal actions on finite-dimensional compact metric spaces are almost finite by Kerr and Naryshkin's result \cite{KerrNaryshkin}. 

As a consequence, the hope to use almost finiteness in order to prove $\mathcal{Z}$-stability of the crossed products associated with the actions of Theorem \ref{thm.nonalmostfinite} is vain.

In his seminal work \cite{Kerr}, Kerr developped a dynamical counterpart of the so called Toms-Winter conjecture for simple separable nuclear $\mathrm{C}^*$-algebras. With a view toward finding dynamical analogues of finite nuclear dimension, $\mathcal{Z}$-stability and strict comparison in the Toms-Winter conjecture, Kerr introduced the following properties for actions of amenable groups on compact spaces: 
\begin{enumerate}[label=(\roman*)]
\item\label{item.1} finite tower dimension,
\item\label{item.2} almost finiteness,
\item\label{item.3} dynamical comparison,
\end{enumerate}
and proved that for a free minimal action of an amenable group $\Gamma$ on a finite dimensional compact metric space $X$, with finitely many invariant ergodic measures, \ref{item.1}$\Rightarrow$\ref{item.2}$\Leftrightarrow$\ref{item.3}, see \cite[Thm.~9.3]{Kerr}. The actions that we construct in Theorem \ref{thm.nonalmostfinite} are profinite. Since profinite minimal actions are uniquely ergodic and have comparison (see Lemma \ref{lem.profinitecomparison}), this shows that Kerr's dynamical version of Toms-Winter conjecture fails for topologically free actions of amenable groups (see Corollary \ref{cor.Kerrfalse}). Therefore, a dynamical analogue of Toms-Winter conjecture remains to be understood for topologically free minimal actions.\\ 

The notion of allostery, which has its roots in subgroup dynamics, will be the main tool in the proof of Theorem \ref{thm.nonalmostfinite}. A \defin{minimal ergodic action} $\Gamma\curvearrowright (X,\mu)$ is an action by homeomorphisms of $\Gamma$ on a compact metric space $X$, which is minimal (every orbit is dense), with an ergodic $\Gamma$-invariant Borel probability measure $\mu$. We say that a minimal ergodic action is: 
\begin{itemize}
\item \defin{topologically free} if the set of points with trivial stabilizer is comeager, that is contains a dense countable intersection of open sets.
\item \defin{essentially free} if the set of points with trivial stabilizer has full measure. 
\end{itemize}
It is a classical result that essential freeness implies
topological freeness for a minimal ergodic action, see for instance \cite[Lem.~2.2]{josephContinuumAllosteric2021}. The study of the converse, which is false in general, will be the main tool in this article and led the author to introduce the following denomination in \cite{josephContinuumAllosteric2021}.

\begin{df}
A minimal ergodic action $\Gamma\curvearrowright (X,\mu)$ is \defin{allosteric} if it is topologically free but not essentially free.
\end{df}

In general, the $\mathcal{Z}$-stability of $C(X)\rtimes\Gamma$, and therefore its classifiability, where $\Gamma\curvearrowright(X,\mu)$ is an allosteric action of an amenable group $\Gamma$, remains an open question. 

\begin{qu} Let $\Gamma$ be an amenable group. Let $\Gamma\curvearrowright (X,\mu)$ be a minimal ergodic action of mean dimension zero. If $\Gamma\curvearrowright (X,\mu)$ is allosteric, is the crossed product $C(X)\rtimes\Gamma$ classifiable by its Elliott invariant? 
\end{qu}

A countable group $\Gamma$ is \defin{allosteric} if it admits an allosteric action. 
The existence of allosteric groups, and more precisely groups which admit allosteric profinite actions, was asked by Grigorchuk, Nekrashevich and Sushchanskii in \cite[Prob.~7.3.3]{GrigorchukNekrashevichSushchanskii}. The first examples of allosteric groups were provided by Bergeron and Gaboriau in \cite{BergeronGaboriau}. They proved that any non-amenable free product of two nontrivial residually finite groups is allosteric. An independent proof of this result for free groups of finite rank was obtained by Abért and Elek in the unpublished paper \cite{AbertElek}. In \cite{AbertElek2}, Abért and Elek proved that the free product of four copies of the cyclic group $C_2$ admits an allosteric action
whose orbit equivalence relation is measure hyperfinite. In \cite{josephContinuumAllosteric2021}, the author proved that the fundamental group of any non-amenable surface group is allosteric, providing the first examples of allosteric groups with one end. In \cite{BergeronGaboriau}, \cite{AbertElek}, \cite{AbertElek2}, and \cite{josephContinuumAllosteric2021}, the allosteric actions obtained are all profinite, which answers positively the question \cite[Prob.~7.3.3]{GrigorchukNekrashevichSushchanskii}. 

As of now, examples of allosteric groups are rare. By contrast, there are plenty of groups that are known to be non-allosteric. This is the case for countable groups $\Gamma$ such that the space $\Sub(\Gamma)$ of subgroups of $\Gamma$ is at most countable \cite[Cor.~2.4]{josephContinuumAllosteric2021}. Examples of groups with only countably many subgroups are finitely generated nilpotent groups, more generally polycyclic groups, extensions of Noetherian groups by groups with only countably many subgroups (e.g.~solvable Baumslag-Solitar groups $\mathrm{BS}(1,n)$) as observed in \cite[Cor.~8.4]{BeckerLubotzkyThom}. Another class of non-allosteric groups is given that the groups whose ergodic invariant random subgroups (that is, the ergodic probability measures on $\Sub(\Gamma)$ that are invariant by conjugation) are all atomic. Among such groups, the most emblematic ones are lattices in simple higher rank Lie groups \cite{StuckZimmer}, but plenty of other classes of groups are known to admit only atomic ergodic invariant random subgroups, see for instance \cite{Bekka}, \cite{Creutz}, \cite{CreutzPeterson}, \cite{DudkoMedynets} or \cite{PetersonThom}. More surprisingly, there exist non-allosteric groups with plenty of ergodic invariant random subgroups. This is the case for instance for the group $\mathrm{FSym}(\N)$ of finitely supported permutations on $\N$. Indeed, one can check using  \cite{Vershik} that any ergodic invariant random subgroup of $\mathrm{FSym}(\N)$ contains the normal subgroup $\mathrm{Alt}(\N)$ in its support. This fact implies that $\mathrm{FSym}(\N)$ is not allosteric. A similar kind of argument applies for weakly branch groups: by the double commutator lemma \cite{Zheng}, any ergodic invariant random subgroup of a weakly branch group contains a non-trivial uniformly recurrent subgroup in its support, and thus weakly branch groups are not allosteric.\\

With a view toward proving Theorem \ref{thm.nonalmostfinite}, we study in Section \ref{sec.criterionwreath} allostery for wreath products. Given two countable groups $\Gamma,\Lambda$, the \defin{wreath product} $\Gamma\wr\Lambda$ is the group 
\[\Gamma\wr\Lambda\coloneqq \Big(\bigoplus_{\Lambda}\Gamma\Big)\rtimes\Lambda,\]
where $\Lambda$ acts on the direct sum $\bigoplus_\Lambda\Gamma$ by shifting the copies of $\Gamma$. Given a group $\Gamma$ and a prime number $p$, we say that $\Gamma$ is \defin{residually $p$-finite} if for every nontrivial element $\gamma\in\Gamma$, there exists a normal subgroup $N\trianglelefteq\Gamma$ such that $\Gamma/N$ is a finite $p$-group and $\gamma\notin N$. 

We prove that wreath products with an abundance of finite $p$-quotients are allosteric.
\begin{thm}\label{thm.allosterywreath} Let $\Lambda$ be a countable group and let $d\in\N^*$. Assume that there exist infinitely many prime numbers $p$ such that $\Lambda$ is a residually $p$-finite group. Then $\Z^d\wr\Lambda$ admits profinite allosteric actions.
\end{thm}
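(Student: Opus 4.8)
Set $G:=\Z^d\wr\Lambda$. The plan is to realize the desired action as a profinite action coming from an explicit decreasing chain of finite-index subgroups $G=G_0\ge G_1\ge G_2\ge\cdots$, and to arrange the chain so that the action on $X=\varprojlim G/G_n$ is topologically free but not essentially free. Recall from Section \ref{sec.criterion} that such a profinite action is automatically minimal and uniquely ergodic (each level $G/G_n$ is a transitive $G$-set, and $X$ is the Cantor space once $[G:G_n]\to\infty$), so only the two freeness conditions require work. I will use two reductions. First, the fixed-point set of $g\in G$ has measure $\mu(\Fix(g))=\lim_n [G:G_n]^{-1}\#\{hG_n : h^{-1}gh\in G_n\}$, and since $\bigcup_{g\ne1}\Fix(g)$ is a countable union of closed sets, the action fails to be essentially free iff $\mu(\Fix(g))>0$ for some $g\ne1$, while topological freeness amounts to each $\Fix(g)$ having empty interior.

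Since infinitely many primes $p$ satisfy that $\Lambda$ is residually $p$-finite, I first fix a sparse sequence of such primes $p_1<p_2<\cdots$ with $\sum_k 1/p_k<\infty$. For each $k$ I choose a normal subgroup $N_k\trianglelefteq\Lambda$ with $\Lambda/N_k$ a finite $p_k$-group, arranged (by enumerating $\Lambda\setminus\{1\}$ and invoking residual $p_k$-finiteness at stage $k$) so that $\bigcap_k N_k=\{1\}$, together with a subgroup $Y_k\ge N_k$ of index $p_k$ and a coset $c_kY_k\ne Y_k$. Writing $s_\mu(v)$ for the element of the base $\bigoplus_\Lambda\Z^d$ equal to $v\in\Z^d$ at coordinate $\mu$ and $0$ elsewhere, and using the quotients $G\twoheadrightarrow\F_{p_k}^d\wr(\Lambda/N_k)$ obtained by reducing the base mod $p_k$ and summing over fibers, I set $\Lambda_n=\bigcap_{k\le n}N_k$ and define $G_n$ to consist of the $(x,\lambda)$ with $\lambda\in\Lambda_n$ such that, for every $k\le n$ and every coordinate $f$ lying in the \emph{active} coset $c_kY_k$ modulo $N_k$, the fiber-sum $\sum_{\bar\mu=f}\bar x_\mu$ vanishes in $\F_{p_k}^d$. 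Imposing these conditions cumulatively makes $(G_n)$ a decreasing chain of finite-index subgroups, as required.

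For non-essential-freeness I will test $g=s_e(v)$ with $v\ne0$ at the identity coordinate $e\in\Lambda$. Since conjugating a base element by $(y,\rho)$ only shifts its support, $h^{-1}gh=s_{\rho^{-1}}(v)$ depends only on the $\Lambda$-part $\rho$, and $s_{\rho^{-1}}(v)\in G_n$ iff for each $k\le n$ with $p_k\nmid v$ the class of $\rho^{-1}$ avoids the active coset $c_kY_k$ modulo $Y_k$. Because the quotients $\Lambda\to\Lambda/Y_k\cong\Z/p_k$ are to pairwise coprime moduli, these events are independent and $\mu(\Fix(g))=\prod_{p_k\nmid v}(1-1/p_k)$, which is positive precisely because $\sum_k 1/p_k<\infty$. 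Thus the action is not essentially free, the infinitude of primes being used here exactly to make the surviving density a convergent product.

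The main obstacle is topological freeness, namely that $\Fix(g)$ has empty interior for every $g\ne1$. Elements $g=(x,\lambda)$ with $\lambda\ne1$ are harmless: since $\bigcap_n\Lambda_n=\{1\}$, the top part of $G/G_n$ is the free $(\Lambda/\Lambda_n)$-coordinate, so $g$ moves every point once $\lambda\notin\Lambda_n$ and $\Fix(g)=\varnothing$. The delicate case is a nontrivial base element $g=(x,e)$, whose fixed set constrains only the $\varprojlim\Lambda/\Lambda_n$-coordinate; it then suffices to show that the union over $k$ of the active cosets is dense. This is where coprimality re-enters: for $k>m$ the index $p_k$ is prime to $[\Lambda:\Lambda_m]$, forcing $\Lambda_m\not\subseteq Y_k$, whence $Y_k\Lambda_m=\Lambda$ and the active coset at level $k$ meets every level-$m$ cylinder. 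Combined with the facts that deep levels separate the finitely many support coordinates of $x$ and that on a dense set of points some active fiber isolates a single coordinate $\mu$ with $\bar x_\mu\ne0$, this shows $\Fix(g)$ contains no cylinder. The careful bookkeeping of possible cancellations among the support coordinates of a general $x$, uniformly over the dense family of active fibers, is the technical heart of the proof; once it is in place the action is topologically free but not essentially free, hence allosteric, and profinite by construction.
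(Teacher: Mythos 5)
Your architecture is genuinely different from the paper's --- the paper builds, for each nontrivial $\gamma=(g,\delta)$, a separate finite-index subgroup $\Gamma_\gamma=A_\gamma\rtimes\Lambda_\gamma$ of $p_\gamma$-power index with $\gamma\notin\Gamma_\gamma$, so that topological freeness comes essentially for free (the basepoint of the inverse limit has trivial stabilizer, and minimality plus the $G_\delta$ property upgrade this to comeager), while the quantitative work goes into counting $S$-fixed cosets --- but your proof as written has a genuine gap exactly where you flag it: topological freeness for a nontrivial base element $g=(x,e)$ is asserted, not proved. The ``careful bookkeeping of possible cancellations'' you defer \emph{is} the load-bearing step, and the preparation you made does not suffice to carry out even your own sketch. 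You arranged only $\bigcap_k N_k=\{1\}$, which guarantees for each nontrivial $\delta\in\Lambda$ \emph{some} $k$ with $\delta\notin N_k$; but your empty-interior argument needs, for the fixed finite difference set $D=\{\mu\inv\mu'\colon\mu\neq\mu'\in\supp(x)\}$, a level $k$ that is simultaneously (a) deeper than the given cylinder depth $m$, (b) such that $N_k\cap D=\varnothing$ (so distinct support coordinates of the shifted $x$ land in distinct $N_k$-fibers, which is precisely what kills the cancellations), and (c) such that $x(\mu_0)\notin(p_k\Z)^d$ for some $\mu_0\in\supp(x)$. Arbitrarily large $k$ satisfying (b) do not follow from $\bigcap_k N_k=\{1\}$: each element of $D$ avoids some $N_k$, but possibly only finitely many, and not all of $D$ simultaneously. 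Note also that you cannot fall back on the paper's soft route: in your chain the basepoint $(eG_n)_n$ is fixed by the whole subgroup $S$ of identity-coordinate translations, since $e\in Y_k$ never lies in the active coset $c_kY_k$, so $\bigcap_n G_n\supseteq S$ and an empty-interior argument for $\Fix(g)$ is unavoidable for your $G_n$. (This is the structural price of your construction: the paper chooses its ``active region'' $E$ \emph{after} seeing $\supp(g)$, covering it, which makes $\gamma\notin\Gamma_\gamma$ automatic; your single active coset per level is chosen in advance.)

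The gap is fillable along the lines you indicate, so the approach is salvageable. Strengthen the choice of the $N_k$: enumerate $\Lambda\setminus\{1\}$ and at stage $k$ take $N_k\trianglelefteq\Lambda$ of $p_k$-power index avoiding the first $k$ nontrivial elements (a finite intersection of normal subgroups of $p_k$-power index again has $p_k$-power index, by residual $p_k$-finiteness applied to each element separately). Then, given $g=(x,e)$ with $x\neq 0$ and a cylinder of depth $m$ with $\Lambda$-part $\rho\Lambda_m$, pick $k>m$ with $N_k\cap D=\varnothing$ and $x(\mu_0)\notin(p_k\Z)^d$ for some $\mu_0\in\supp(x)$; since $[\Lambda\colon Y_k]=p_k$ is coprime to $[\Lambda\colon\Lambda_m]$, your observation $\Lambda_mY_k=\Lambda$ produces $\tau\in\Lambda_m$ with $(\rho\tau)\inv\mu_0\in c_kY_k$, and by (b) the fiber $(\rho\tau)\inv\mu_0N_k$ meets the shifted support only in $(\rho\tau)\inv\mu_0$, so its fiber-sum is $x(\mu_0)\neq 0$ in $\F_{p_k}^d$ and the conjugate $(x((\rho\tau)\cdot),e)$ violates the level-$k$ condition; hence no cylinder lies in $\Fix(g)$, and $\bigcup_{g\neq 1}\Fix(g)$ is meager. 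Together with the routine verification that your fiber-sum conditions define subgroups (left translation by $\Lambda_n\leq N_k$ preserves each left $N_k$-coset because $N_k$ is normal --- the analogue of the paper's Claim 1), this completes your proof. The resulting trade-off is interesting when set against the paper: your non-essential freeness is a clean independence computation over the coprime quotients $\Lambda/Y_k$ with a convergent product $\prod_k(1-1/p_k)$, in place of the paper's counting estimate $l<\varepsilon_\gamma[\Lambda\colon\Lambda_\gamma]$, but you pay for it by having to prove topological freeness by hand rather than getting it from one trivially-stabilized point and minimality.
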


To prove this theorem, we develop a profinite criterion that implies allostery. This criterion, which is explained in Section \ref{sec.criterion}, uses a profinite construction and rely on the existence of a sequence of finite index subgroups which mimic allostery at finite stages. In Section \ref{sec.criterionwreath}, we prove Theorem \ref{thm.allosterywreath} by showing that $\Z^d\wr\Lambda$ satisfies this criterion. In particular, the allosteric actions that we obtain are all profinite.

As a corollary of Theorem \ref{thm.allosterywreath}, we provide the first examples of \emph{amenable} allosteric groups, which answers a question of Ortega and Scarparo \cite[Rem.~2.6]{OrtegaScarparo}.

\begin{cor}\label{cor.allosteryamenable} Let $\Lambda$ be a finitely generated torsion-free nilpotent group and let $d\in\N^*$. Then $\Z^d\wr\Lambda$ is an amenable allosteric group.\end{cor}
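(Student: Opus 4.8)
The plan is to deduce this directly from Theorem \ref{thm.allosterywreath}, the burden being reduced to checking that a finitely generated torsion-free nilpotent group $\Lambda$ satisfies both the amenability conclusion we want and the residual $p$-finiteness hypothesis of that theorem.

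First I would verify amenability, which is the elementary half. The group $\Z^d$ is abelian and $\Lambda$ is finitely generated nilpotent, hence both are elementary amenable. The base group $\bigoplus_\Lambda \Z^d$ is a directed union of finite direct powers of $\Z^d$, so it too is elementary amenable, and $\Z^d\wr\Lambda$ is by definition an extension of $\bigoplus_\Lambda\Z^d$ by $\Lambda$. Since the class of elementary amenable (in particular amenable) groups is closed under directed unions and extensions, $\Z^d\wr\Lambda$ is amenable.

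Next I would supply the hypothesis needed to invoke Theorem \ref{thm.allosterywreath}, namely that $\Lambda$ is residually $p$-finite for infinitely many primes $p$. This is where the torsion-freeness is essential: by a classical theorem of Gruenberg on residual properties of soluble groups, every finitely generated torsion-free nilpotent group is residually $p$-finite for \emph{every} prime $p$. In particular $\Lambda$ is residually $p$-finite for infinitely many $p$, so the hypothesis of Theorem \ref{thm.allosterywreath} holds.

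Applying Theorem \ref{thm.allosterywreath} then produces a profinite allosteric action of $\Z^d\wr\Lambda$, so in particular $\Z^d\wr\Lambda$ is allosteric; combined with the amenability established above, this gives the claim. There is no genuine obstacle beyond correctly citing Gruenberg's residual $p$-finiteness result, since all the dynamical content is already carried by Theorem \ref{thm.allosterywreath}; the only point requiring care is to confirm that the torsion-free hypothesis on $\Lambda$ is exactly what upgrades residual finiteness to residual $p$-finiteness for infinitely many $p$.
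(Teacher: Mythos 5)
Your proposal is correct and matches the paper's own proof: the paper likewise cites Gruenberg's theorem that finitely generated torsion-free nilpotent groups are residually $p$-finite for every prime and then invokes Theorem \ref{thm.allosterywreath}. Your additional verification of amenability via elementary amenable closure properties is a detail the paper leaves implicit, but it changes nothing about the route.
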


\begin{proof}[Proof of Corollary \ref{cor.allosteryamenable}] If $\Lambda$ is finitely generated torsion-free and nilpotent, then $\Lambda$ is residually $p$-finite for every prime $p$ by a result of Gruenberg \cite{Gruenberg}. Therefore $\Z^d\wr\Lambda$ is allosteric and amenable.
\end{proof}


Again, the allosteric actions that we obtain in Corollary \ref{cor.allosteryamenable} are profinite. The groups under consideration in this corollary all have infinite asymptotic dimension, but, after a first version of this paper appeared, Hirshberg and Wu proved -- using the same ideas and methods developed in the present paper -- the following general result: the wreath product $\Gamma\wr\Lambda$ of any nontrivial residually finite abelian group $\Gamma$ by any countably infinite residually finite group $\Lambda$ admits profinite allosteric actions, see \cite[Cor.~11.3]{HirshbergWu}.


The proof of Theorem \ref{thm.nonalmostfinite} will be provided in Section \ref{sec.mainthm} at the end of this paper. Let us sketch its proof here. The key observation in order to prove this result is that allosteric actions of amenable groups cannot be almost finite, as almost finite actions are always essentially free for any invariant probability measure (see Lemma \ref{lem.afnonessentiallyfree} for of proof of this fact for zero-dimensional compact metric spaces). Since the allosteric actions that we construct in Corollary \ref{cor.allosteryamenable} are profinite, this provides minimal, topologically free profinite actions on the Cantor space that are not almost finite.
  
\section{Almost finiteness and dynamical comparison} \label{sec.kerr}

In this section we discuss the notions of almost finiteness and dynamical comparison for zero-dimensional compact metric spaces.

A compact metric space $X$ is \defin{zero-dimensional} if it admits a basis of clopen sets. Almost finiteness was defined by Kerr for group actions on compact metric spaces \cite[Def.~8.2]{Kerr} but we will restrict in this paper to group actions on zero-dimensional compact metric spaces as the definition is easier to state in this case. 

\begin{df}
An action $\Gamma\curvearrowright X$ on a compact metric zero-dimensional space $X$ is \defin{almost finite} if for all finite $K\Subset\Gamma$ (here and thereafter, $\Subset$ stands for ``is a finite subset of'') and $\varepsilon >0$, there exists $V_1,\dots,V_n\subseteq X$ clopen and $S_1,\dots,S_n\Subset \Gamma$ such that 
\begin{itemize}
\item the sets $sV_i$ for $s\in S_i$ and $i\in\{1,\dots,n\}$ are pairwise disjoint,
\item for all $i\in\{1,\dots,n\}$, $\lvert KS_i\triangle S_i\rvert <\varepsilon \lvert S_i\rvert$,
\item $X=\displaystyle\bigsqcup_{i=1}^nS_iV_i$.
\end{itemize}
\end{df}

Observe that the notion of almost finiteness makes sense for action of \emph{amenable} groups, as the finite sets $S_i$ are $(K,\varepsilon)$-F\o{}lner sets of $\Gamma$. If $\Gamma\curvearrowright X$ is an action on a compact space, we denote by $\mathrm{Prob}_\Gamma(X)$ the space of $\Gamma$-invariant Borel probability measures on $X$. 

\begin{lem}\label{lem.afnonessentiallyfree} Let $\Gamma\curvearrowright X$ be an action on a zero-dimensional compact metric space, which is almost finite. Then for any $\mu\in\mathrm{Prob}_{\Gamma}(X)$, the action $\Gamma\curvearrowright (X,\mu)$ is essentially free. 
\end{lem}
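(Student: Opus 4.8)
The plan is to reduce essential freeness to a statement about individual group elements: since $\Gamma$ is countable and the non-free locus is $\bigcup_{\gamma\neq e}\Fix(\gamma)$, it suffices by countable subadditivity to show that $\mu(\Fix(\gamma))=0$ for each fixed $\gamma\neq e$, where $\Fix(\gamma)=\{x\in X:\gamma x=x\}$ is closed, hence $\mu$-measurable. I would fix such a $\gamma$ and an arbitrary $\varepsilon>0$, and then invoke almost finiteness with $K=\{e,\gamma\}$ to produce clopen sets $V_1,\dots,V_n$ and Følner sets $S_1,\dots,S_n$ as in the definition. The decomposition $X=\bigsqcup_i S_iV_i$ into the pairwise disjoint clopen levels $\{sV_i:i,\,s\in S_i\}$ is in particular a finite measurable partition, so $\Gamma$-invariance of $\mu$ gives the normalization
\[
\sum_{i=1}^n \abs{S_i}\,\mu(V_i)=\mu(X)=1.
\]

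The key step is a combinatorial observation pinning down which levels $\Fix(\gamma)$ can meet. I claim that if $s\in S_i$ satisfies $\gamma s\in S_i$, then the level $sV_i$ is disjoint from $\Fix(\gamma)$: for if some $x\in V_i$ had $sx\in\Fix(\gamma)$, then $(\gamma s)x=\gamma(sx)=sx$ would place a single point simultaneously in $(\gamma s)V_i$ and $sV_i$; but $\gamma s\neq s$ since $\gamma\neq e$, so these are two distinct and therefore disjoint levels, a contradiction. Hence $\Fix(\gamma)$ only meets the ``bad'' levels indexed by $\{s\in S_i:\gamma s\notin S_i\}=S_i\setminus\gamma\inv S_i$. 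Translating by $\gamma$ identifies the cardinality of this set with $\abs{\gamma S_i\setminus S_i}=\abs{KS_i\triangle S_i}<\varepsilon\abs{S_i}$, the Følner defect.

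Combining these with $\mu(sV_i)=\mu(V_i)$ yields
\[
\mu(\Fix(\gamma))\;\leq\;\sum_{i=1}^n\abs{S_i\setminus\gamma\inv S_i}\,\mu(V_i)\;<\;\varepsilon\sum_{i=1}^n\abs{S_i}\,\mu(V_i)\;=\;\varepsilon,
\]
and letting $\varepsilon\to 0$ gives $\mu(\Fix(\gamma))=0$, as desired. The main obstacle is isolating the right combinatorial lemma — that a level $sV_i$ with $\gamma s\in S_i$ cannot meet $\Fix(\gamma)$ — and recognizing that the count of the remaining bad levels is exactly controlled by the Følner condition; once this is in place, the rest is bookkeeping with the invariance and disjointness of the castle.
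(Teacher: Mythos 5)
Your proof is correct and takes essentially the same route as the paper's: the paper applies almost finiteness with $K=\{\gamma\inv\}$, observes that a level $sV_i$ meeting $\Fix(\gamma)$ forces $\gamma s\notin S_i$, i.e.\ $s\in S_i\setminus KS_i$, and then bounds $\mu(\Fix(\gamma))\leq\sum_{i=1}^n\lvert KS_i\triangle S_i\rvert\,\mu(V_i)\leq\varepsilon\sum_{i=1}^n\lvert S_i\rvert\,\mu(V_i)=\varepsilon$, exactly your computation. Your choice $K=\{e,\gamma\}$ together with the identification $\lvert S_i\setminus\gamma\inv S_i\rvert=\lvert\gamma S_i\setminus S_i\rvert=\lvert KS_i\triangle S_i\rvert$ is only a cosmetic variant of the same counting, and your explicit reduction to $\mu(\Fix(\gamma))=0$ via countability just spells out what the paper leaves implicit.
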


\begin{proof}
Fix $\mu\in\mathrm{Prob}_{\Gamma}(X)$. Let $\gamma\in\Gamma\setminus\{e_\Gamma\}$, let $K\coloneqq\{\gamma\inv\}$ and fix $\varepsilon >0$. Let $V_1,\dots,V_n\subseteq X$ clopen and $S_1,\dots,S_n\Subset\Gamma$ finite that witness almost finiteness for the pair $(K,\varepsilon)$. If $\{x\in X\colon \gamma x=x\}$ intersects $sV_i$, then $\gamma sV_i$ and $sV_i$ are not disjoint, so $s\in S_i\setminus KS_i$. Thus
\begin{align*}
\mu(\{x\in X\colon \gamma x=x\})&\leq \sum_{i=1}^n\lvert KS_i\triangle S_i\rvert \mu(V_i) \\ 
&\leq \varepsilon \sum_{i=1}^n\lvert S_i\rvert\mu(V_i)=\varepsilon.
\end{align*}
This implies that $\Gamma\curvearrowright (X,\mu)$ is essentially free. 
\end{proof}

\begin{df}
An action $\Gamma\curvearrowright X$ on a compact metric zero-dimensional space has comparison if for every nonempty clopen sets $A,B\subseteq X$ satisfying $\mu(A)<\mu(B)$ for all $\mu\in\mathrm{Prob}_\Gamma(X)$, there exists a partition $A=\bigsqcup_{i=1}^n C_i$ into clopen sets and elements $\gamma_1,\dots,\gamma_n\in\Gamma$ such that $(\gamma_iC_i)_{1\leq i\leq n}$ are pairwise disjoint subsets of $B$.  
\end{df}

The rest of this section is devoted to the proof that profinite minimal actions have dynamical comparison. Recall that an action $\Gamma\curvearrowright X$ on a zero-dimensional compact metric space $X$ is \defin{profinite} if the orbits of the $\Gamma$-action on the set $\mathrm{Clo}(X)$ of clopen subsets of $X$ are all finite. In the proof of the following lemma, we will need the fact that profinite minimal actions are uniquely ergodic (see Lemma \ref{lem.profinite} for a more precise statement). Let us also provide some preliminaries on Boolean algebras. The powerset $2^X$ together with the operations of union, intersection and complement, is a Boolean algebra and $\mathrm{Clo}(X)$ is a Boolean subalgebra of it.  Given a Boolean algebra $\mathcal{B}$ and a subset $E\subseteq\mathcal{B}$, the intersection of all Boolean subalgebras of $\mathcal{B}$ that contains $E$ is called the Boolean algebra generated by $E$. A Boolean algebra generated by a finite set is necessarily finite (\cite[Ch.~11, Cor.~2]{BooleanAlgebras}). Let $\mathcal{B}$ be a Boolean subalgebra of the powerset $2^X$. An atom of $\mathcal{B}$ is any non-empty element $A\in\mathcal{B}$ such that the only elements of $\mathcal{B}$ contained in $A$ are $\emptyset$ and $A$. If $\mathcal{B}$ is finite, then any element of $\mathcal{B}$ can be written uniquely as a union of atoms of $\mathcal{B}$ (\cite[Ch.~11, Thm.~2]{BooleanAlgebras}). We are now ready to state and prove the following.

\begin{lem}\label{lem.profinitecomparison}
Any profinite minimal action $\Gamma\curvearrowright X$ has comparison. 
\end{lem}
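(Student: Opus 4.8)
The plan is to exploit the profinite hypothesis to replace $A$ and $B$ by unions of atoms of a single finite, $\Gamma$-invariant Boolean algebra, and then use minimality to reduce comparison to a purely combinatorial counting argument. First I would invoke unique ergodicity of profinite minimal actions (Lemma~\ref{lem.profinite}): since $\mathrm{Prob}_\Gamma(X)=\{\mu\}$ is a singleton, the hypothesis ``$\mu(A)<\mu(B)$ for all invariant measures'' is just $\mu(A)<\mu(B)$ for this one $\mu$. Because the action is profinite, the $\Gamma$-orbits of $A$ and $B$ in $\mathrm{Clo}(X)$ are finite; let $\mathcal{B}$ be the Boolean algebra generated by these two finite orbits. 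As a Boolean algebra generated by a finite subset of $\mathrm{Clo}(X)$, $\mathcal{B}$ is finite, and since it is generated by $\Gamma$-invariant sets of clopen sets it is itself $\Gamma$-invariant. Write $\alpha_1,\dots,\alpha_N$ for its atoms; these form a clopen partition of $X$ that $\Gamma$ permutes, and both $A$ and $B$ are disjoint unions of atoms.

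Next I would use minimality to show that $\Gamma$ acts transitively on the finite set of atoms. Indeed, the union of the $\Gamma$-orbit of any atom is a nonempty clopen, hence closed, $\Gamma$-invariant subset of $X$; by minimality it must equal $X$, so every atom lies in that orbit. Transitivity forces the unique invariant measure to assign the same value to every atom, namely $\mu(\alpha_j)=1/N$. Consequently, if $A$ is the union of $a$ atoms and $B$ the union of $b$ atoms, then $\mu(A)=a/N$ and $\mu(B)=b/N$, so the hypothesis $\mu(A)<\mu(B)$ becomes the integer inequality $a<b$; in particular $a\leq b$.

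Finally I would build the required partition and group elements directly. Enumerate the atoms of $A$ as $\alpha_{i_1},\dots,\alpha_{i_a}$ and pick $a$ distinct atoms $\alpha_{j_1},\dots,\alpha_{j_a}$ of $B$, which is possible since $a\leq b$. Using transitivity, for each $k$ choose $\gamma_k\in\Gamma$ with $\gamma_k\alpha_{i_k}=\alpha_{j_k}$ (here $\gamma_k$ maps the atom $\alpha_{i_k}$ onto $\alpha_{j_k}$ as clopen sets). Setting $C_k\coloneqq\alpha_{i_k}$ gives a clopen partition $A=\bigsqcup_{k=1}^a C_k$, and the sets $\gamma_k C_k=\alpha_{j_k}$ are pairwise disjoint clopen subsets of $B$, exactly as required by the definition of comparison. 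The only point demanding care is the transitivity of the atom action; everything else is bookkeeping about finite Boolean algebras. I expect this transitivity step — tying minimality of $\Gamma\curvearrowright X$ to transitivity on the finitely many atoms, and hence to equidistribution of $\mu$ — to be the conceptual heart of the argument, with the passage to a finite invariant Boolean algebra being the enabling technical move supplied by the profinite hypothesis.
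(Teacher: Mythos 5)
Your proof is correct and takes essentially the same route as the paper's: unique ergodicity reduces the hypothesis to $\mu(A)<\mu(B)$, the finite $\Gamma$-orbits of $A$ and $B$ generate a finite $\Gamma$-invariant Boolean algebra whose atoms partition $X$, minimality yields transitivity on the atoms (hence equal measure $1/N$), and counting plus transitivity produces the required $\gamma_k$. The only difference is cosmetic: you spell out the explicit matching of the $a$ atoms of $A$ with $a$ distinct atoms of $B$, a step the paper compresses into ``by transitivity of $\Gamma\curvearrowright\mathcal{A}$.''
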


\begin{proof}
Let $\mu$ be the unique $\Gamma$-invariant probability measure on $X$. Fix $A,B$ clopen such that $\mu(A)<\mu(B)$. Since $\Gamma\curvearrowright X$ is profinite, the set $E$ of all $\Gamma$-translates of $A$ and of $B$ is finite. Therefore, the Boolean algebra $\mathcal{B}$ generated by $E$ is finite. Let $\mathcal{A}\subseteq\mathcal{B}$ denotes the set of atoms of $\mathcal{B}$. By definition of an atom, the set $\mathcal{A}$ is $\Gamma$-invariant and two atoms are either disjoint of equal. For all $C\in\mathcal{A}$, we have $\bigcup_{\gamma\in\Gamma}\gamma C=X$ by minimality. This shows that $\Gamma\curvearrowright\mathcal{A}$ is transitive and that $\mathcal{A}$ forms a partition of $X$ whose pieces all have the same $\mu$-measure.  Since $A$ and $B$ are in $\mathcal{B}$, they can be expressed uniquely as a disjoint union of atoms. So there exists $C_1,\dots,C_n, C_1',\dots,C_m'\in\mathcal{A}$ such that $A=\bigsqcup_{i=1}^nC_i$ and $B=\bigsqcup_{j=1}^{m}C_j'$. Since $\mu(A)<\mu(B)$, then $n<m$ and by transitivity of $\Gamma\curvearrowright\mathcal{A}$, there exist $\gamma_1,\dots,\gamma_n\in\Gamma$ such that $(\gamma_iC_i)_{1\leq i\leq n}$ are pairwise disjoint subsets of $B$. \end{proof}

We therefore obtain the following result as a Corollary of Theorem \ref{thm.allosterywreath}.

\begin{cor}\label{cor.Kerrfalse} Let $\Lambda$ be a finitely generated torsion free nilpotent group and let $d\in\N^*$. Then $\Z^d\wr\Lambda$ admits a minimal, topologically free action $\Gamma\curvearrowright X$ on a finite dimensional compact metric space with finitely many invariant ergodic measures, which is not almost finite but have comparison.  
\end{cor}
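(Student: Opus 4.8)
The plan is to produce a single action that simultaneously witnesses all the required properties by invoking the allosteric action furnished by Theorem \ref{thm.allosterywreath}. First I would let $\Gamma\wr$-notation settle: write $\Gamma=\Z^d\wr\Lambda$, which is amenable since it is a wreath product of amenable groups (abelian by finitely generated nilpotent). Because $\Lambda$ is finitely generated, torsion-free and nilpotent, Gruenberg's theorem shows $\Lambda$ is residually $p$-finite for every prime $p$, so in particular for infinitely many primes; thus the hypotheses of Theorem \ref{thm.allosterywreath} are met and $\Gamma$ admits a profinite allosteric action $\Gamma\curvearrowright (X,\mu)$. By the definition of profinite action, $X$ is a zero-dimensional compact metric space, hence in particular finite dimensional, covering the dimension hypothesis of the statement.

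Next I would extract the individual conclusions from the properties of this single action. The action is minimal and topologically free directly from the definition of allostery. For unique ergodicity and comparison I would appeal to the profinite structure: a profinite minimal action is uniquely ergodic (the fact used in the proof of Lemma \ref{lem.profinitecomparison}, stated precisely in Lemma \ref{lem.profinite}), which immediately gives finitely many — indeed exactly one — invariant ergodic measure. Comparison is then exactly the content of Lemma \ref{lem.profinitecomparison}, which applies to any profinite minimal action. So the only remaining point is non-almost-finiteness.

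The heart of the argument is that allostery obstructs almost finiteness, and here I would use the contrapositive of Lemma \ref{lem.afnonessentiallyfree}. That lemma shows that an almost finite action on a zero-dimensional compact metric space is essentially free with respect to \emph{every} invariant probability measure. Since our action is allosteric, it is by definition topologically free but \emph{not} essentially free for $\mu$; in particular there exists an invariant probability measure, namely $\mu$, for which the action fails to be essentially free. Were the action almost finite, Lemma \ref{lem.afnonessentiallyfree} would force essential freeness with respect to $\mu$, a contradiction. Hence the action is not almost finite, while nonetheless having comparison. This is precisely the dichotomy \ref{item.2}$\Leftrightarrow$\ref{item.3} of \cite[Thm.~9.3]{Kerr} failing, so the corollary follows, and I would close by remarking that it is exactly this tension — comparison holding but almost finiteness failing on a finite-dimensional space with a unique invariant measure — that demonstrates Kerr's dynamical Toms-Winter analogue breaks for topologically free actions.

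The main obstacle is conceptual rather than computational: one must be careful that Lemma \ref{lem.afnonessentiallyfree} is stated for zero-dimensional spaces, so it is essential that the allosteric action supplied by Theorem \ref{thm.allosterywreath} lives on a Cantor-type (profinite, hence zero-dimensional) space rather than a merely finite-dimensional one; the profinite hypothesis does exactly this double duty of guaranteeing both zero-dimensionality (for the non-almost-finiteness argument via Lemma \ref{lem.afnonessentiallyfree}) and comparison plus unique ergodicity (via Lemma \ref{lem.profinitecomparison}). No genuinely new estimate is needed; the work is entirely in assembling the cited results correctly and observing that a single profinite allosteric action realizes every clause of the statement at once.
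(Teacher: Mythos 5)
Your proposal is correct and follows exactly the paper's intended argument: apply Gruenberg's theorem to invoke Theorem \ref{thm.allosterywreath}, obtain a profinite allosteric action (zero-dimensional hence finite dimensional, minimal and topologically free by definition of allostery), get unique ergodicity from Lemma \ref{lem.profinite}, comparison from Lemma \ref{lem.profinitecomparison}, and non-almost-finiteness from the contrapositive of Lemma \ref{lem.afnonessentiallyfree} applied to the measure witnessing non-essential freeness. Your closing remark about the zero-dimensionality hypothesis doing double duty is also exactly the right point of care, and matches how the paper assembles these ingredients.
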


By contrast, Kerr proved that for any \emph{free} minimal action of an amenable group on a finite-dimensional compact metric space with finitely many invariant ergodic measures, almost finiteness is equivalent to comparison \cite[Thm.~9.3]{Kerr}. 

\section{A profinite criterion for allostery}\label{sec.criterion}

In this section we provide a profinite criterion which implies allostery. This criterion was used without being made explicit in the proof that the fundamental group of any hyperbolic surface group is allosteric \cite{josephContinuumAllosteric2021}. We provide here a precise and explicit criterion. Let us first recall some basic notions on profinite actions. In Section \ref{sec.kerr}, we defined profinite actions as actions $\Gamma\curvearrowright X$ on zero-dimensional compact metric spaces $X$ such that the $\Gamma$-orbit of every clopen set is finite. We provide here an equivalent definition with inverse limit of actions on finite sets.

Let $(I,\leq)$ be a directed \emph{countable} poset. For all $i\in I$, let $\Gamma\curvearrowright X_i$ be an action on a finite set. Assume that for all $i\leq j$, we have a $\Gamma$-equivariant surjective map $f_{ij}:X_j\to X_i$, such that
\begin{itemize}
\item $f_{ii}$ is the identity on $X_i$,
\item $f_{ik}=f_{ij}\circ f_{jk}$ for all $i\leq j\leq k$. 
\end{itemize}
The inverse limit of the finite spaces $X_i$ is the space
\[\underset{i\in I}{\varprojlim}\ \! X_i \coloneqq\left\{(x_i)\in\prod_{i\in I}X_i\colon x_i=f_{ij}(x_j) \text{ for all }i\leq j\right\}.\]
This space is closed, thus compact metrizable, and totally disconnected in the product topology. The diagonal action of $\Gamma$ on $\prod_{i\in I}X_i$ restricts to an action by homeomorphisms of $\Gamma$ on $\varprojlim X_i$. 

If each $X_i$ is endowed with a $\Gamma$-invariant probability measure $\mu_i$ such that $(f_{ij})_*\mu_j=\mu_i$ for all $i\leq j$, we let $\mu$ be the unique Borel probability measure on $\varprojlim X_i$ that projects for every $j\in I$ onto $\mu_j$ via the canonical projection $\pi_j : \varprojlim X_i \to X_j$. The $\Gamma$-action on $\varprojlim X_i$ preserves $\mu$ and is called the \defin{inverse limit} of the p.m.p.\! actions $\Gamma\curvearrowright (X_i,\mu_i)$. A p.m.p.\! action of $\Gamma$ is \defin{profinite} if it is measurably isomorphic to an inverse limit of p.m.p.\! $\Gamma$-actions on finite sets. The following lemma is well-known, see \cite[Prop.~4.1]{Grigorchuk} for a proof.

\begin{lem}\label{lem.profinite} Let $\Gamma\curvearrowright\varprojlim (X_i,\mu_i)$ be the inverse limit of the p.m.p.\! finite actions $\Gamma\curvearrowright (X_i,\mu_i)$ and let $\mu$ denotes the inverse limit of the $\mu_i$. Then
the following are equivalent:
\begin{enumerate}
\item For every $i\in I$, $\Gamma\curvearrowright X_i$ is transitive and $\mu_i$ is the uniform probability measure on $X_i$. 
\item The action $\Gamma\curvearrowright \varprojlim X_i$ is minimal.
\item The p.m.p.\! action $\Gamma\curvearrowright (\varprojlim X_i, \mu)$ is ergodic. 
\item The action $\Gamma\curvearrowright\varprojlim X_i$ is uniquely ergodic. 
\end{enumerate}
\end{lem}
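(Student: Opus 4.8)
The plan is to prove the equivalence through the two topological implications $(1)\Leftrightarrow(2)$ together with the measure-theoretic chain $(1)\Rightarrow(4)\Rightarrow(3)\Rightarrow(1)$. Write $X=\varprojlim_i X_i$ with the canonical $\Gamma$-equivariant projections $\pi_i\colon X\to X_i$, which are surjective because the bonding maps $f_{ij}$ are. The one structural fact I would record first is that, since each $X_i$ is finite and $I$ is directed, the cylinders $\pi_i^{-1}(\{x\})$ (for $x\in X_i$) form a clopen basis of $X$; consequently every clopen subset of $X$ has the form $\pi_i^{-1}(A)$ for some $i\in I$ and some $A\subseteq X_i$, and the clopen sets form an algebra generating the Borel $\sigma$-algebra.

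For $(1)\Rightarrow(2)$ I would take a nonempty closed $\Gamma$-invariant $F\subseteq X$ and note that each $\pi_i(F)$ is a nonempty $\Gamma$-invariant subset of the transitive finite set $X_i$, hence $\pi_i(F)=X_i$. Given $x=(x_i)\in X$, every basic neighborhood $\pi_i^{-1}(\{x_i\})$ of $x$ then meets $F$, and since $F$ is closed this forces $x\in F$; thus $F=X$ and the action is minimal. For the converse $(2)\Rightarrow(1)$, if $A\subseteq X_i$ is nonempty and $\Gamma$-invariant then $\pi_i^{-1}(A)$ is a nonempty closed invariant subset, so minimality gives $\pi_i^{-1}(A)=X$ and $A=X_i$; hence each $X_i$ is transitive, and a $\Gamma$-invariant probability measure on a finite transitive $\Gamma$-set is automatically uniform, yielding the statement about $\mu_i$.

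For the measure side, $(1)\Rightarrow(4)$ goes by showing that any invariant probability measure $\nu$ coincides with $\mu$: for each $i$ the pushforward $(\pi_i)_*\nu$ is a $\Gamma$-invariant measure on the transitive finite set $X_i$, hence uniform, hence equal to $\mu_i=(\pi_i)_*\mu$; so $\nu$ and $\mu$ agree on all cylinders, thus on the generating clopen algebra, and therefore $\nu=\mu$. The implication $(4)\Rightarrow(3)$ is the standard fact that a unique invariant measure must be ergodic, since a nontrivial invariant set $A$ with $0<\mu(A)<1$ would produce the distinct invariant measure $\mu(\,\cdot\cap A)/\mu(A)$.

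The step I expect to be the main obstacle is $(3)\Rightarrow(1)$. Ergodicity of $(X,\mu)$ passes to each finite factor $(X_i,\mu_i)$ via $\pi_i$, so $\mu_i$ is uniform on its support $\supp\mu_i$, which must be a single $\Gamma$-orbit. The delicate point is to upgrade this to transitivity of all of $X_i$ rather than merely of the orbit carrying $\supp\mu$, since ergodicity alone only sees the support. Here I would use that $\mu$ has full support in the profinite setting under consideration, so that $\supp\mu_i=\pi_i(\supp\mu)=X_i$; this forces $\supp\mu_i=X_i$, whence $X_i$ is a single orbit and $\mu_i$ is uniform, giving $(1)$ and closing the cycle. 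Making this support identification explicit is the crux: without it a Dirac measure on a trivial finite action would be ergodic while violating $(1)$, $(2)$ and $(4)$, so the argument genuinely rests on the full-support feature of the profinite systems at hand.
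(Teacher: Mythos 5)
First, a point of comparison: the paper does not prove this lemma at all --- it is quoted as well-known with a citation to Grigorchuk [Prop.~4.1] --- so your proposal can only be judged on its own merits. Your implications (1)$\Leftrightarrow$(2) and (1)$\Rightarrow$(4)$\Rightarrow$(3) are correct and are the standard arguments: project closed invariant sets, respectively invariant measures, to the finite levels, using directedness of $I$ to see that the cylinders $\pi_i^{-1}(\{x_i\})$ form a clopen neighborhood basis and that two measures agreeing on all cylinders coincide.

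The genuine gap is in your (3)$\Rightarrow$(1). You isolate the right difficulty --- ergodicity of the factor $(X_i,\mu_i)$ only yields that $\mu_i$ is uniform on a single orbit, not that $X_i$ is transitive --- but your fix, ``I would use that $\mu$ has full support in the profinite setting under consideration,'' is not available: full support appears nowhere in the hypotheses. The setup assumes only surjective equivariant bonding maps with compatible invariant measures, and your own counterexample satisfies all of it: the trivial action of $\Gamma$ on $X_i=\{a,b\}$ with $f_{ij}=\mathrm{id}$ and $\mu_i=\delta_a$ gives an ergodic inverse limit for which (1), (2) and (4) all fail. Worse, the appeal to full support is circular: since $\pi_i(\supp\mu)=\supp\mu_i$ and, under ergodicity, $\supp\mu_i$ is a single orbit, full support of $\mu$ is \emph{equivalent} to transitivity of every $X_i$, which is exactly what you are trying to prove. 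The honest conclusion is that (3)$\Rightarrow$(1) requires the standing convention that each $\mu_i$ has full support (in the cited source the measures are uniform by hypothesis, which supplies it), and, consistently with this, the paper never uses that direction: in the proof of the allosteric criterion, transitivity of each $\Gamma\curvearrowright X_F$ is verified directly and the lemma is invoked only via (1)$\Rightarrow$(2),(3), while the comparison lemma uses (2)$\Rightarrow$(4), which factors through (2)$\Rightarrow$(1)$\Rightarrow$(4) and avoids the problematic step. So either add the full-support hypothesis explicitly --- after which your identification $\supp\mu_i=\pi_i(\supp\mu)=X_i$ goes through verbatim --- or record that the statement must be read with that convention; as written, your proof of the equivalence is incomplete at exactly this point.
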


We are now ready to state the allosteric criterion. 

\begin{thm}[Allosteric criterion]\label{thm.allosterycriterion} Let $\Gamma$ be a countable group and let $S\leq\Gamma$ be a nontrivial subgroup. Fix a family $(\varepsilon_\gamma)_{\gamma\in\Gamma\setminus\{e_\Gamma\}}$ of real numbers in $]0,1[$ such that $\prod_{\gamma\in\Gamma\setminus\{e_\Gamma\}}(1-\varepsilon_\gamma)>0$ and a family $(n_\gamma)_{\gamma\in\Gamma\setminus\{e_\Gamma\}}$ of pairwise coprime integers. Assume that for all $\gamma\in\Gamma\setminus\{e_\Gamma\}$, there exists a finite index subgroup $\Gamma_\gamma\leq\Gamma$, whose index is a power of $n_\gamma$, such that \begin{enumerate}
\item\label{item.topfree} $\gamma\notin\Gamma_\gamma$,
\item\label{item.essfree} $\lvert\{q\in\Gamma/\Gamma_\gamma\colon \forall s\in S,  s q=q\}\rvert\geq (1-\varepsilon_\gamma)[\Gamma:\Gamma_\gamma]$.
\end{enumerate}
Then the profinite action 
\[\Gamma\curvearrowright\underset{F\Subset\Gamma\setminus\{e_\Gamma\}}{\varprojlim} \Big(\Gamma/\bigcap_{\gamma\in F}\Gamma_\gamma, \mu_F\Big),\]
where $\mu_F$ is the uniform probability measure on $\Gamma/\bigcap_{\gamma\in F}\Gamma_\gamma$, 
is allosteric. 
\end{thm}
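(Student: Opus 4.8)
The plan is to verify in turn the three defining features of an allosteric action: that the displayed inverse-limit action is minimal with an ergodic invariant measure, that it is not essentially free, and that it is topologically free. Write $K_F\coloneqq\bigcap_{\gamma\in F}\Gamma_\gamma$ for $F\Subset\Gamma\setminus\{e_\Gamma\}$, so that $X_F=\Gamma/K_F$ is a finite set on which $\Gamma$ acts transitively by left translation, $\mu_F$ is the uniform measure, and the bonding maps are the canonical projections $\Gamma/K_{F'}\to\Gamma/K_F$ for $F\subseteq F'$. Since each $\Gamma\curvearrowright X_F$ is transitive and each $\mu_F$ is uniform, Lemma \ref{lem.profinite} immediately gives that the inverse-limit action $\Gamma\curvearrowright X\coloneqq\varprojlim_F(X_F,\mu_F)$ is minimal, uniquely ergodic, and ergodic for its unique invariant measure $\mu$; in particular it is a minimal ergodic action in the required sense.

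The quantitative heart of the argument is to convert the coprimality hypothesis into a product (independence) structure. For finite-index subgroups, $[\Gamma:A\cap B]$ is a common multiple of $[\Gamma:A]$ and $[\Gamma:B]$ and divides their product, so when these two indices are coprime one obtains $[\Gamma:A\cap B]=[\Gamma:A]\,[\Gamma:B]$. Iterating over $\gamma\in F$, and using that the indices $[\Gamma:\Gamma_\gamma]$, being powers of the pairwise coprime integers $n_\gamma$, are themselves pairwise coprime, the natural $\Gamma$-equivariant injection $\Gamma/K_F\hookrightarrow\prod_{\gamma\in F}\Gamma/\Gamma_\gamma$ becomes a bijection by a cardinality count. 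These bijections are compatible with the projections, so passing to the limit yields a $\Gamma$-equivariant, measure-preserving homeomorphism $X\cong\prod_{\gamma\in\Gamma\setminus\{e_\Gamma\}}\Gamma/\Gamma_\gamma$, carrying $\mu$ to the product of the uniform measures and $\Gamma$ to the diagonal action. I expect this to be the main step to get right, as all subsequent estimates rest on the genuine statistical independence of the coordinates that it provides.

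To show the action is not essentially free, I would fix a nontrivial $s_0\in S$ and bound the measure of the $S$-fixed set from below. Under the product identification, $\Fix_X(S)=\prod_{\gamma}\{q\in\Gamma/\Gamma_\gamma\colon sq=q\ \forall s\in S\}$ is a product of clopen sets, so its measure factorizes. Hypothesis \ref{item.essfree} states that the $\gamma$-th factor has normalized cardinality at least $1-\varepsilon_\gamma$, whence $\mu(\Fix_X(S))\geq\prod_{\gamma}(1-\varepsilon_\gamma)>0$ by the hypothesis on the product. As every point of $\Fix_X(S)$ has stabilizer containing $s_0\neq e_\Gamma$, the set of points with nontrivial stabilizer has positive measure, so the action is not essentially free.

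Finally, for topological freeness I would exhibit a single point with trivial stabilizer and promote this to a comeager statement via minimality, rather than attempting to control normal cores directly. The base point $x^\ast=(e\Gamma_\gamma)_\gamma$ has $\Stab(x^\ast)=\bigcap_\gamma\Gamma_\gamma$, which is trivial because hypothesis \ref{item.topfree} yields $\delta\notin\Gamma_\delta$ for every $\delta\neq e_\Gamma$. Suppose now that some $\Fix_X(\delta)$ with $\delta\neq e_\Gamma$ had nonempty interior, hence contained a nonempty open set $U$. By minimality the invariant open set $\bigcup_{g\in\Gamma}gU$ equals $X$; since $gU\subseteq\Fix_X(g\delta g\inv)$, this forces $X=\bigcup_{g\in\Gamma}\Fix_X(g\delta g\inv)$, so $g\delta g\inv$ fixes $x^\ast$ for some $g$, placing $g\delta g\inv$ in the trivial group $\Stab(x^\ast)$ and contradicting $\delta\neq e_\Gamma$. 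Thus each $\Fix_X(\delta)$ is closed with empty interior, their countable union over $\delta\neq e_\Gamma$ is meager, and the set of points with trivial stabilizer is comeager. Together with the preceding paragraph, the action is topologically free but not essentially free, that is, allosteric.
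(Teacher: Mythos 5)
Your proposal is correct and follows essentially the same route as the paper: minimality and ergodicity via Lemma \ref{lem.profinite}, the coprimality-based identification of $\Gamma\curvearrowright\Gamma/\bigcap_{\gamma\in F}\Gamma_\gamma$ with the diagonal action on $\prod_{\gamma\in F}\Gamma/\Gamma_\gamma$ (which the paper asserts and you actually justify by an index count), the same product estimate $\mu(\Fix_X(S))\geq\prod_{\gamma}(1-\varepsilon_\gamma)>0$ for non-essential freeness, and the same base point $(e\Gamma_\gamma)_\gamma$ with trivial stabilizer for topological freeness, your Baire-category phrasing via nowhere dense fixed-point sets being a cosmetic variant of the paper's dense-$G_\delta$ argument. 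One small slip that does not affect the result: $[\Gamma:A\cap B]$ need \emph{not} divide $[\Gamma:A][\Gamma:B]$ for general finite-index subgroups (two order-$2$ subgroups of $S_3$ give $6\nmid 9$); what you need in the coprime case follows instead from $[\Gamma:A\cap B]$ being a common multiple of the two indices together with the inequality $[\Gamma:A\cap B]\leq[\Gamma:A][\Gamma:B]$, which holds because $[A:A\cap B]\leq[\Gamma:B]$.
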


Before we give the proof of this criterion, let us make some remarks. First, if $\Gamma$ satisfies this criterion, then $\Gamma$ is residually finite. In fact, we don't know any example of a non-residually finite allosteric group. In the criterion, the assumption that the $(n_\gamma)_{\gamma\in\Gamma\setminus\{e_\Gamma\}}$ are pairwise coprime is used to obtain a minimal ergodic profinite action. Item \ref{item.topfree} is used to get topological freeness of the profinite action, whereas Item \ref{item.essfree} is used to get non-essential freeness of the profinite action. 

\begin{proof}
For all $F\Subset\Gamma\setminus\{e_\Gamma\}$, we denote by $X_F$ the finite set $\Gamma/\bigcap_{\gamma\in F}\Gamma_\gamma$. We denote by $X$ the inverse limit $\varprojlim X_F$, by $\mu$ the probability measure on $X$ that projects onto $\mu_F$ and by $\alpha$ the profinite action $\Gamma\curvearrowright (X,\mu)$. For all $F\Subset\Gamma\setminus\{e_\Gamma\}$, the action $\Gamma\curvearrowright X_F$ is transitive, therefore we get by Lemma \ref{lem.profinite} that $\alpha$ is a minimal ergodic action. Let us prove that $\alpha$ is topologically free but not essentially free. For $x\in X$, we denote by $\Stab_\alpha(x)\coloneqq\{\gamma\in\Gamma\colon \gamma x=x\}$ the stabilizer of $x$ for the action $\alpha$. 

We start with topological freeness. For all $F\Subset\Gamma\setminus\{e_\Gamma\}$, let $y_F\coloneqq \bigcap_{\gamma\in F}\Gamma_\gamma\in X_F$ and let $y\coloneqq (y_F)\in X$. By assumption, $\gamma\notin\Gamma_\gamma$ for all $\gamma\in\Gamma\setminus\{e_\Gamma\}$ and thus $\Stab_\alpha(y)=\{e_\Gamma\}$. Since $\alpha$ is minimal, this implies that a dense set of points have trivial stabilizer. Moreover, the set $\Stab_{\alpha}^{-1}(\{e_\Gamma\})=\bigcap_{\gamma\in\Gamma\setminus\{e_\Gamma\}}\{x\in X\colon \gamma x\neq x\}$ is always $G_\delta$. Therefore, it is comeager and this shows that $\alpha$ is topologically free. 

We now prove that $\alpha$ is not essentially free. Since the index of $[\Gamma:\Gamma_\gamma]$ is a power of $n_\gamma$ for all $\gamma\in\Gamma$ and the $n_\gamma$'s are pairwise coprime, the action $\Gamma\curvearrowright X_F$ is isomorphic to the diagonal action $\Gamma\curvearrowright \prod_{\gamma\in F}\Gamma/\Gamma_\gamma$ of the left coset actions. Therefore for all $F\Subset\Gamma\setminus\{e_\Gamma\}$,
\begin{align*}
\frac{\lvert\{x\in X_F\colon \forall s\in S, sx=x\}\rvert}{\lvert X_F\rvert} &=\prod_{\gamma\in F}\frac{\lvert\{q\in \Gamma/\Gamma_\gamma\colon \forall s\in S, sq=q\}\rvert}{[\Gamma:\Gamma_\gamma]} \\ 
& \geq \prod_{\gamma\in F}(1-\varepsilon_\gamma).
\end{align*}
By definition of $\mu$, this implies that \[\mu(\{x\in X\colon S\leq \Stab_{\alpha}(x)\})\geq \prod_{\gamma\in\Gamma\setminus\{e_\Gamma\}}(1-\varepsilon_\gamma)>0\]
which shows that $\alpha$ is not essentially free.
\end{proof}

\section{Allostery and wreath products}\label{sec.criterionwreath}

In this section, we apply the allosteric criterion of Theorem \ref{thm.allosterycriterion} to prove that some wreath products are allosteric. In the sequel, we will only work with wreath products of the form $A\wr\Lambda$ with $A$ abelian and we recall the definitions in this case. Let $A$ and $\Lambda$ be two countable groups with $A$ abelian. The \defin{support} of a function $f : \Lambda\to A$ is defined by 
$\supp(f)\coloneqq\{\lambda\in\Lambda\colon f(\lambda)\neq 0\}$. Pointwise addition defines a group operation on $A^\Lambda$. The subgroup of $A^\Lambda$ consisting of functions whose support is finite is denoted by $\bigoplus_{\lambda\in\Lambda}A$. The \defin{wreath product} of $A$ by $\Lambda$, denoted by $A\wr\Lambda$, is the semi-direct product
\[A\wr\Lambda\coloneqq\textstyle\big(\bigoplus_{\lambda\in\Lambda}A\big)\rtimes\Lambda\] where $\Lambda$ acts by shifting the copies of $A$ in $\bigoplus_{\lambda\in\Lambda}A$. In other words, the group multiplication is given by $(f,\gamma)(f',\gamma')\coloneqq (f+ f'(\gamma\inv\cdot),\gamma\gamma')$. 

The following theorem uses the allosteric criterion described in Theorem \ref{thm.allosterycriterion} to prove that wreath products with an abundance of $p$-finite quotients are allosteric. \begin{thm} Let $\Lambda$ be a countable group. Assume that there exist infinitely many prime numbers $p$ such that $\Lambda$ is a residually $p$-finite group. Then for all $d\geq 1$, the wreath product $\Z^d\wr\Lambda$ is allosteric. 
\end{thm}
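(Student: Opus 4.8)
The plan is to verify the hypotheses of the allosteric criterion (Theorem \ref{thm.allosterycriterion}) for $\Gamma=\Z^d\wr\Lambda$, taking as distinguished subgroup $S$ the copy of $\Z^d$ sitting at $e_\Lambda$ inside the base $\bigoplus_\Lambda\Z^d$; this is nontrivial since $d\geq 1$. Fix an enumeration $\Gamma\setminus\{e_\Gamma\}=\{\gamma_1,\gamma_2,\dots\}$ and, using the hypothesis, assign to each $\gamma_j$ a distinct prime $p_j$ for which $\Lambda$ is residually $p_j$-finite; set $n_{\gamma_j}:=p_j$, so the $n_\gamma$ are pairwise coprime. Choose $\varepsilon_{\gamma_j}:=2^{-j}$, so that $\prod_j(1-\varepsilon_{\gamma_j})>0$. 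It then suffices to produce, for each $\gamma=\gamma_j$ and $p=p_j$, a finite-index subgroup $\Gamma_\gamma$ of $p$-power index satisfying $\gamma\notin\Gamma_\gamma$ together with $|\{q\in\Gamma/\Gamma_\gamma : sq=q\ \forall s\in S\}|\geq(1-\varepsilon_\gamma)[\Gamma:\Gamma_\gamma]$.

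All candidate subgroups will be pulled back along finite $p$-group quotients of the form $Q=(\Z/p^k)^d\wr(\Lambda/N)$, where $N\trianglelefteq\Lambda$ has $p$-power index and $\Lambda/N$ is a $p$-group; write $\phi\colon\Gamma\to Q$ for the induced surjection, $B=\bigoplus_{\Lambda/N}(\Z/p^k)^d$ for the abelian base, and $C_{\bar\mu}\leq B$ for the copy at coset $\bar\mu$. A direct computation in the wreath product shows that conjugation by an element of $\Gamma$ whose $\Lambda/N$-component is $\bar\mu$ carries $\phi(S)$ (the copy at the identity coset) to $C_{\bar\mu^{-1}}$; consequently, for a subgroup $H\leq B$ the coset $qH$ is $S$-fixed in $Q/H\cong\Gamma/\phi^{-1}(H)$ precisely when $C_{\bar\mu^{-1}}\subseteq H$, a condition depending only on $\bar\mu=\pi(q)\in\Lambda/N$ since $H\subseteq B=\ker\pi$. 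The two cases are then as follows. If $\gamma=(f,\lambda)$ with $\lambda\neq e_\Lambda$, choose $N$ of large index with $\lambda\notin N$ and take $H=B$, so that $\phi(\gamma)\notin H$ while \emph{every} coset is $S$-fixed. If $\lambda=e_\Lambda$, so that $\gamma$ lies in the base, choose $N$ of large $p$-power index whose cosets separate the finite support of $f$ and set $H=\bigoplus_{\bar\mu\neq\mu_1N}C_{\bar\mu}$, omitting a single copy $\mu_1N$ on which $\phi(\gamma)$ is nonzero (guaranteed by taking $k$ large); then $\phi(\gamma)\notin H$, the unique non-fixed value of $\bar\mu$ is $\mu_1^{-1}N$, the fibers of $Q/H\to\Lambda/N$ are equal, and the fraction of $S$-fixed cosets equals $1-1/[\Lambda:N]$. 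In both cases $[\Gamma:\Gamma_\gamma]=[Q:H]$ is a power of $p$, and since an infinite residually $p$-finite group admits $p$-quotients of arbitrarily large order, we may force $1/[\Lambda:N]\leq\varepsilon_\gamma$, securing condition (2) of the criterion; condition (1) is exactly $\phi(\gamma)\notin H$, which in passing shows $\Gamma$ itself to be residually $p$-finite and hence yields topological freeness.

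The main obstacle is the tension between conditions (1) and (2): forcing $\phi(S)$ to fix nearly all cosets pushes $H$ to be as large as the full base $B$, yet detecting a base element $\gamma$ compels $H$ to omit the copy supporting $\phi(\gamma)$. The resolution above omits \emph{exactly one} copy, so the unfixed fraction is only $1/[\Lambda:N]$, which the infinitude of $p$-quotients lets us drive below $\varepsilon_\gamma$; the bookkeeping that a single omitted copy produces a single non-fixed class in $\Lambda/N$ is the crux. Once all hypotheses are checked, Theorem \ref{thm.allosterycriterion} immediately yields that the associated profinite action is allosteric, completing the proof.
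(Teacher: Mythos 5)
Your proof is correct, and it follows the paper's overall strategy---verifying the allosteric criterion (Theorem \ref{thm.allosterycriterion}) with the same witness subgroup $S\cong\Z^d$ at the identity coordinate and pairwise distinct primes assigned to the nontrivial elements of $\Gamma$---but your construction of $\Gamma_\gamma$ differs from the paper's in three genuine ways. First, to detect a base element $\gamma=(g,e_\Lambda)$ the paper works modulo $p_\gamma$ itself, and must therefore choose the prime adapted to $\gamma$ so that $g(\supp(g))\cap(p_\gamma\Z)^d=\varnothing$; you instead work modulo $p^k$ with $k$ large, which frees the assignment of primes from any arithmetic condition on the values of $g$. Second, the paper imposes its congruence condition (coset sums lying in $(p_\gamma\Z)^d$) on a set $E$ of $l>\lvert\supp(g)\rvert$ cosets covering the support, yielding unfixed fraction $l/[\Lambda:\Lambda_\gamma]$, whereas you impose it on a single coset $\mu_1 N$ carrying a nonzero value of $\phi(\gamma)$, yielding unfixed fraction exactly $1/[\Lambda:N]$; your bookkeeping is sound, since $(h,\bar\mu)\inv(t,e)(h,\bar\mu)=(t(\bar\mu\,\cdot),e)$ is supported on $\{\bar\mu\inv\}$, so the unique unfixed fiber of $Q/H\to\Lambda/N$ is the one over $\mu_1\inv N$, and all fibers have the common size $p^{kd}$ because $(h,\bar\mu)H=(h',\bar\mu)H$ exactly when $h-h'$ has vanishing sum over the single coset $\bar\mu\mu_1 N$. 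Third, when the $\Lambda$-component of $\gamma$ is nontrivial you simply take $H=B$, so that \emph{every} coset is $S$-fixed; this is cleaner than the paper, which runs the same $A_\gamma\rtimes\Lambda_\gamma$ construction in both cases and merely adds $\delta\notin\Lambda_\gamma$. Two minor remarks: the separation of $\supp(f)$ by the cosets of $N$ is genuinely needed in your second case (otherwise coset sums could cancel and $\phi(\gamma)$ could vanish), and you do require it; and your appeal to arbitrarily large $p$-power quotients tacitly assumes $\Lambda$ infinite---harmless, since a nontrivial group that is residually $p$-finite for two distinct primes is infinite, and the paper's own proof makes the same implicit assumption through the requirement $l<\varepsilon_\gamma[\Lambda:\Lambda_\gamma]$.
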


\begin{proof}
Let $\Gamma\coloneqq \Z^d\wr\Lambda$. Fix $(p_\gamma)_{\gamma\in\Gamma\setminus\{e_\Gamma\}}$ a sequence of pairwise distinct prime numbers, such that for all nontrivial element $\gamma=(g,\delta)$ in $\Gamma$, the group $\Lambda$ is a residually $p_\gamma$-finite group and $g(\supp(g))\cap (p_\gamma\Z)^d=\varnothing$. Fix also a sequence $(\varepsilon_\gamma)_{\gamma\in\Gamma\setminus\{e_\Gamma\}}$ of real numbers in $]0,1[$ such that $\prod_{\gamma\in\Gamma\setminus\{e_\Gamma\}}(1-\varepsilon_\gamma)>0$. 

In the remainder of the proof, we fix a nontrivial element $\gamma\coloneqq (g,\delta)$ in $\Gamma$ and we will construct a finite index subgroup $\Gamma_\gamma$ satisfying the assumptions of the allosteric criterion (Theorem \ref{thm.allosterycriterion}). Fix $l\in\N$ such that $l>\lvert\supp(g)\rvert$. Let us fix a finite index normal subgroup $\Lambda_\gamma\trianglelefteq\Lambda$ by considering the following two cases. 
\begin{itemize}
\item If $\delta=e_\Lambda$, then fix $\Lambda_\gamma\trianglelefteq\Lambda$ of finite index such that: 
\begin{itemize}
\item $[\Lambda:\Lambda_\gamma]$ is a power of $p_\gamma$ that satisfies $l<\varepsilon_\gamma[\Lambda:\Lambda_\gamma]$,
\item for all distinct $\lambda,\lambda'\in\supp(g)$, $\lambda\inv\lambda'\notin\Lambda_\gamma$. 
\end{itemize}
\item If $\delta\neq e_\Lambda$, then fix $\Lambda_\gamma\trianglelefteq\Lambda$ of finite index such that: 
\begin{itemize}
\item $[\Lambda:\Lambda_\gamma]$ is a power of $p_\gamma$ that satisfies $l<\varepsilon_\gamma[\Lambda:\Lambda_\gamma]$,
\item for all distinct $\lambda,\lambda'\in\supp(g)$, $\lambda\inv\lambda'\notin\Lambda_\gamma$,
\item $\delta\notin\Lambda_\gamma$. 
\end{itemize}
\end{itemize}
Such a subgroup $\Lambda_\gamma$ exists because $\Lambda$ is a residually $p_\gamma$-finite group. Observe that the second condition implies that any two distinct elements in $\supp(g)$ belong to distinct left cosets of $\Lambda_\gamma$. Fix a subset $E\subseteq\Lambda/\Lambda_\gamma$ of cardinality $l$ such that all the left cosets of $\Lambda_\gamma$ that intersect $\supp(g)$ belong to $E$. This is possible since $l>\lvert\supp(g)\rvert$. Let $A_\gamma$ be the subgroup of $\bigoplus_{\lambda\in\Lambda}\Z^d$ given by 
\[A_\gamma\coloneqq \Big\{f\in\textstyle\bigoplus_{\lambda\in\Lambda}\Z^d\colon \sum_{\lambda\in q}f(\lambda)\in (p_\gamma\Z)^d \text{ for all }q\in E\Big\}.\]
\begin{claim}$A_\gamma$ is $\Lambda_\gamma$-invariant. 
\end{claim}
\begin{cproof}
Let $f\in A_\gamma$ and $\lambda\in\Lambda_\gamma$. Since $\Lambda_\gamma$ is normal in $\Lambda$, then for all $q\in E$, we have $\lambda\inv q=q$. Therefore,
\[\sum_{\lambda'\in q}f(\lambda\inv\lambda') =\sum_{\lambda'\in q}f(\lambda')\in (p_\gamma\Z)^d.\]
This shows that $A_\gamma$ is $\Lambda_\gamma$-invariant.
\end{cproof}

Let us define $\Gamma_\gamma\coloneqq A_\gamma\rtimes\Lambda_\gamma$. This is a finite index subgroup of $\Gamma$. 
  
\begin{claim}\label{claim.index} $[\Gamma:\Gamma_\gamma]=[\Lambda:\Lambda_\gamma][\bigoplus_{\lambda\in\Lambda}\Z^d:A_\gamma]=[\Lambda:\Lambda_\gamma]
(p_\gamma)^{ld}$, which is a power of $p_\gamma$.
\end{claim}

\begin{cproof} Let us denote by $\boldsymbol{0}$ the neutral element of $\bigoplus_{\lambda\in\Lambda}\Z^d$. The proof is in two steps. Firstly, the map given by 
\[\begin{array}{l|rcl}
\phi: & \Lambda/\Lambda_\gamma & \longrightarrow & \Gamma/(\bigoplus_{\lambda\in\Lambda}\Z^d)\rtimes\Lambda_\gamma \\
    & \xi\Lambda_\gamma & \longmapsto & (\boldsymbol{0},\xi)(\bigoplus_{\lambda\in\Lambda}\Z^d)\rtimes\Lambda_\gamma \end{array}\]
    is well defined, as if $\xi_1\Lambda_\gamma=\xi_2\Lambda_\gamma$, then $(\boldsymbol{0},\xi_1)^{-1}(\boldsymbol{0},\xi_2)=(\boldsymbol{0},\xi_1^{-1}\xi_2)$ belongs to $(\bigoplus_{\lambda\in\Lambda}\Z^d)\rtimes\Lambda_\gamma$. It is moreover straightfoward to check that $\phi$ is one-to-one and onto. Therefore $[\Gamma:(\bigoplus_{\lambda\in\Lambda}\Z^d)\rtimes\Lambda_\gamma]=[\Lambda:\Lambda_\gamma]$. Secondly, the map 
\[\begin{array}{l|rcl}
\psi: & (\bigoplus_{\lambda\in\Lambda}\Z^d)/A_\gamma & \longrightarrow & (\bigoplus_{\lambda\in\Lambda}\Z^d)\rtimes\Lambda_\gamma/\Gamma_\gamma \\
    & f+A_\gamma & \longmapsto & (f,e_\Lambda)\Gamma_\gamma \end{array}\] is well defined, as if $f_1+A_\gamma= f_2+ A_\gamma$, then $(f_1,e_\Lambda)^{-1}(f_2,e_\Lambda)=(f_2-f_1,e_\Lambda)$ belongs to $\Gamma_\gamma$. Once again, a straightforward computation shows that $\psi$ is one-to-one and onto. Therefore, $[(\bigoplus_{\lambda\in\Lambda}\Z^d)\rtimes\Lambda_\gamma:\Gamma_\gamma]=[\bigoplus_{\lambda\in\Lambda}\Z^d:A_\gamma]=(p_\gamma)^{ld}$ and this finishes the proof.
\end{cproof}
  

\begin{claim} The element $\gamma=(g,\delta)$ doesn't belong to $\Gamma_\gamma$.  
\end{claim}
\begin{cproof} If $\delta\neq e_\Lambda$, then $\delta\notin\Lambda_\gamma$ by construction. Therefore $\gamma\notin\Gamma_\gamma$. If $\delta=e_\Lambda$, then $\supp(g)$ is not empty since $(g,\delta)\neq e_\Gamma$. Let $\lambda\in\supp(g)$ and let $q\in\Lambda/\Lambda_\gamma$ be such that $\lambda\in q$. By construction of $E$, we have $q\in E$. Since any two distinct elements in $\supp(g)$ belong to different left cosets of $\Lambda_\gamma$, we get that \[\sum_{\lambda'\in q}g(\lambda')=g(\lambda)\]
which does not belong to $(p_\gamma\Z)^d$ because  $g(\supp(g))\cap (p_\gamma\Z)^d=\varnothing$. Therefore $\gamma\notin\Gamma_\gamma$. 
\end{cproof}

Let $S$ be the subgroup of $\Z^d\wr\Lambda$ defined by 
\[S\coloneqq\big\{(f,e_\Lambda)\in\Z^d\wr\Lambda\colon \supp(f)\subseteq\{e_\Lambda\}\big\}.\] 

\begin{claim} 
$\lvert\{q\in \Gamma/\Gamma_\gamma\colon \forall s\in S, sq=q\}\rvert\geq (1-\varepsilon_\gamma)[\Gamma:\Gamma_\gamma]$.
\end{claim}

\begin{cproof} Let $\pi: \Gamma/\Gamma_\gamma\onto \Gamma/(\bigoplus_{\lambda\in\Lambda}\Z^d)\rtimes\Lambda_\gamma$ be the quotient map. This is a $\Gamma$-invariant map whose fibers are all of cardinality $[\bigoplus_{\lambda\in\Lambda}\Z^d:A_\gamma]$ by Claim \ref{claim.index}. Moreover, remark that for all $(f,\lambda)\in\Gamma$, we have $\pi((f,\lambda)\Gamma_\gamma)=(\boldsymbol{0},\lambda)(\bigoplus_{\lambda\in\Lambda}\Z^d)\rtimes\Lambda_\gamma$. For all $i\in\{1,\dots,d\}$, we denote by $t_i$ the element of $\bigoplus_{\lambda\in\Lambda}\Z^d$ such that $\supp(t_i)\subseteq\{e_\Lambda\}$ and $t_i(e_\Lambda)$ is the $i$-th element of the canonical basis of $\Z^d$. Let us also define $s_i\coloneqq (t_i,e_\Lambda)\in S$. Then the group $S$ is generated by $s_1,\dots,s_d$. Therefore, a coset $q\in\Gamma/\Gamma_\gamma$ is fixed by every element of $S$ if and only if it is fixed by all the elements $s_1,\dots,s_d$. So without loss of generality, it is sufficient to prove that the number of $q\in\Gamma/\Gamma_\gamma$ that are fixed by $s_1$ is $\geq (1-\varepsilon_\gamma)[\Gamma:\Gamma_\gamma]$.  For all  $q=(f,\lambda)\Gamma_\gamma\in \Gamma/\Gamma_\gamma$, we have 
\begin{align}\label{eq.equivalence}
s_1q=q &\Leftrightarrow (f,\lambda)\inv (t_1,e_\Lambda)(f,\lambda)\in \Gamma_\gamma\nonumber \\ &\Leftrightarrow (t_1(\lambda\cdot), e_\Lambda)\in \Gamma_\gamma\nonumber \\
&\Leftrightarrow \forall q'\in E, \sum_{\lambda'\in q'}t_1(\lambda\lambda')\in (p_\gamma\Z)^d.
\end{align}
This equivalence shows that for all $q\in\Gamma/\Gamma_\gamma$, we have $s_1q=q$ if and only if $s_1\pi(q)=\pi(q)$. Thus, we get that
\[\textstyle\{q\in \Gamma/\Gamma_\gamma\colon s_1q=q\}=[\bigoplus_{\lambda\in\Lambda}\Z^d:A_\gamma]\{q\in\Gamma/(\bigoplus_{\lambda\in\Lambda}\Z^d)\rtimes\Lambda_\gamma\colon s_1q=q\}.\]
Using the bijection $\phi$ defined in the proof of Claim \ref{claim.index},  the equivalence \eqref{eq.equivalence} implies that there is exactly $[\Lambda:\Lambda_\gamma]-l$ elements $q\in\Gamma/(\bigoplus_{\lambda\in\Lambda}\Z^d)\rtimes\Lambda_\gamma$ such that $s_1q=q$. We therefore obtain
\begin{align*}
\frac{\lvert\{q\in\Gamma/\Gamma_\gamma\colon s_1q=q\}\rvert}{[\Gamma:\Gamma_\gamma]}&=\frac{([\Lambda:\Lambda_\gamma]-l)[\bigoplus_\Lambda\Z^d:A_\gamma]}{[\Lambda:\Lambda_\gamma][\bigoplus_\Lambda\Z^d:A_\gamma]} \\
&>1-\varepsilon_\gamma,
\end{align*}  
which finishes the proof of the claim.
\end{cproof}

The sequence of finite-index subgroups $(\Gamma_\gamma)_{\gamma\in\Gamma\setminus\{e_\Gamma\}}$ satisfies all the assumptions of Theorem \ref{thm.allosterycriterion}, therefore $\Gamma=\Z^d\wr\Lambda$ is allosteric.
\end{proof}

\section{Proof of the main theorem
}\label{sec.mainthm}
We are now ready to prove Theorem \ref{thm.nonalmostfinite}.

\begin{proof}[Proof of Theorem \ref{thm.nonalmostfinite}]
Let $\Lambda$ be a finitely generated torsion free nilpotent group and let $d\in\N^*$. Let $\Gamma\coloneqq \Z^d\wr\Lambda$. By a result of Gruenberg \cite{Gruenberg}, $\Lambda$ is residually $p$-finite for every prime $p$. By Theorem \ref{thm.allosterywreath}, $\Gamma$ admits profinite allosteric actions. These actions are therefore topologically free, minimal actions on the Cantor space. Since they are not essentially free, they cannot be almost finite by Lemma \ref{lem.afnonessentiallyfree}. This proves the theorem. 
\end{proof}

\section*{Acknowledgment}
I thank David Kerr, Petr Naryshkin and Owen Tanner for a stimulating discussion on allostery and $\mathrm{C}^*$-algebras, as well as for their comments on a preliminary version of this work. I also thank the organizers of the conference \emph{``Group Actions: Dynamics, Measure, Topology''} in Münster during which this interaction took place. I thank Damien Gaboriau for very helpful comments on a first version of this paper. Finally, I thank the reviewer for many remarks and corrections and for pointing out a mistake in an earlier version of the proof of Claim 2.

\bibliographystyle{alpha}
\bibliography{biblio}

\newcommand{\etalchar}[1]{$^{#1}$}
\begin{thebibliography}{GGK{\etalchar{+}}22}

\bibitem[AD87]{Anantharaman}
Claire Anantharaman-Delaroche.
\newblock Syst{\`e}mes dynamiques non commutatifs et moyennabilit{\'e}. ({Non}
  commutative dynamical systems and amenability).
\newblock {\em Math. Ann.}, 279(1-2):297--315, 1987.

\bibitem[AE07]{AbertElek}
Miklós Ab\'ert and Gábor Elek.
\newblock Non-abelian free groups admit non-essentially free actions on rooted
  trees.
\newblock arXiv:0707.0970, 2007.

\bibitem[AE12]{AbertElek2}
Miklós Ab\'ert and Gábor Elek.
\newblock {Hyperfinite actions on countable sets and probability measure
  spaces}.
\newblock In {\em Dynamical systems and group actions. Dedicated to Anatoli
  Stepin on the occasion of his 70th birthday}, pages 1--16. Providence, RI:
  American Mathematical Society (AMS), 2012.

\bibitem[AS94]{ArchboldSpielberg}
Robert~J. Archbold and John~S. Spielberg.
\newblock Topologically free actions and ideals in discrete
  {{\(C^*\)}}-dynamical systems.
\newblock {\em Proc. Edinb. Math. Soc., II. Ser.}, 37(1):119--124, 1994.

\bibitem[Bek20]{Bekka}
Bachir Bekka.
\newblock Character rigidity of simple algebraic groups.
\newblock arXiv:1908.06928, 2020.

\bibitem[BG04]{BergeronGaboriau}
Nicolas Bergeron and Damien Gaboriau.
\newblock {Asymptotique des nombres de Betti, invariants \(\ell^2\) et
  laminations}.
\newblock {\em {Comment. Math. Helv.}}, 79(2):362--395, 2004.

\bibitem[BLT19]{BeckerLubotzkyThom}
Oren {Becker}, Alex {Lubotzky}, and Andreas {Thom}.
\newblock {Stability and invariant random subgroups}.
\newblock {\em {Duke Math. J.}}, 168(12):2207--2234, 2019.

\bibitem[CET{\etalchar{+}}21]{classification}
Jorge Castillejos, Samuel Evington, Aaron Tikuisis, Stuart White, and Wilhelm
  Winter.
\newblock Nuclear dimension of simple {{\(\mathrm{C}^*\)}}-algebras.
\newblock {\em Invent. Math.}, 224(1):245--290, 2021.

\bibitem[CJK{\etalchar{+}}18]{CJKMSTD}
Clinton~T. Conley, Steve~C. Jackson, David Kerr, Andrew~S. Marks, Brandon
  Seward, and Robin~D. Tucker-Drob.
\newblock F{{\o}}lner tilings for actions of amenable groups.
\newblock {\em Math. Ann.}, 371(1-2):663--683, 2018.

\bibitem[CP17]{CreutzPeterson}
Darren Creutz and Jesse Peterson.
\newblock {Stabilizers of ergodic actions of lattices and commensurators}.
\newblock {\em {Trans. Am. Math. Soc.}}, 369(6):4119--4166, 2017.

\bibitem[Cre17]{Creutz}
Darren Creutz.
\newblock {Stabilizers of actions of lattices in products of groups}.
\newblock {\em {Ergodic Theory Dyn. Syst.}}, 37(4):1133--1186, 2017.

\bibitem[DM14]{DudkoMedynets}
Artem Dudko and Konstantin Medynets.
\newblock {Finite factor representations of Higman-Thompson groups.}
\newblock {\em {Groups Geom. Dyn.}}, 8(2):375--389, 2014.

\bibitem[EN17]{ElliottNiu}
George~A. Elliott and Zhuang Niu.
\newblock The {{\({C}^\ast\)}}-algebra of a minimal homeomorphism of zero mean
  dimension.
\newblock {\em Duke Math. J.}, 166(18):3569--3594, 2017.

\bibitem[GGK{\etalchar{+}}22]{PICP}
Eusebio Gardella, Shirly Geffen, Julian Kranz, Petr Naryshkin, and Andrea
  Vaccaro.
\newblock Tracially amenable actions and purely infinite crossed products.
\newblock arXiv:2211.16872, 2022.

\bibitem[GGKN22]{PICP2}
Eusebio Gardella, Shirly Geffen, Julian Kranz, and Petr Naryshkin.
\newblock Classifiability of crossed products by nonamenable groups.
\newblock arXiv:2201.03409, 2022.

\bibitem[GH09]{BooleanAlgebras}
Steven Givant and Paul Halmos.
\newblock {\em Introduction to {Boolean} algebras}.
\newblock Undergraduate Texts Math. New York, NY: Springer, 2009.

\bibitem[GK10]{GiolKerr}
Julien Giol and David Kerr.
\newblock Subshifts and perforation.
\newblock {\em J. Reine Angew. Math.}, 639:107--119, 2010.

\bibitem[GNS00]{GrigorchukNekrashevichSushchanskii}
Rostislav~I. Grigorchuk, Volodymyr~V. Nekrashevich, and Vitaly~I. Suschanskii.
\newblock {Automata, dynamical systems, and groups}.
\newblock In {\em Dynamical systems, automata, and infinite groups. Transl.
  from the Russian}, pages 128--203. Moscow: MAIK Nauka/Interperiodica
  Publishing, 2000.

\bibitem[Gri11]{Grigorchuk}
Rostislav~I. Grigorchuk.
\newblock {Some topics in the dynamics of group actions on rooted trees.}
\newblock {\em {Proc. Steklov Inst. Math.}}, 273:64--175, 2011.

\bibitem[Gru57]{Gruenberg}
Karl~W. Gruenberg.
\newblock {Residual properties of infinite soluble groups}.
\newblock {\em {Proc. Lond. Math. Soc. (3)}}, 7:29--62, 1957.

\bibitem[HW23]{HirshbergWu}
Ilan Hirshberg and Jianchao Wu.
\newblock Long thin covers and nuclear dimension.
\newblock arXiv:2308.12504, 2023.

\bibitem[Jos21]{josephContinuumAllosteric2021}
Matthieu Joseph.
\newblock Continuum of allosteric actions for non-amenable surface groups.
\newblock arXiv:2110.01068, to appear in Ergod. Theory Dyn. Syst., 2021.

\bibitem[Ker20]{Kerr}
David Kerr.
\newblock Dimension, comparison, and almost finiteness.
\newblock {\em J. Eur. Math. Soc. (JEMS)}, 22(11):3697--3745, 2020.

\bibitem[KN21]{KerrNaryshkin}
David Kerr and Petr Naryshkin.
\newblock Elementary amenability and almost finiteness.
\newblock arXiv:2107.05273, 2021.

\bibitem[Nar23]{naryshkin2023group}
Petr Naryshkin.
\newblock Group extensions preserve almost finiteness.
\newblock arXiv:2304.02456, 2023.

\bibitem[Niu19]{Niu}
Zhuang Niu.
\newblock Comparison radius and mean topological dimension:
  $\mathbb{Z}^d$-actions.
\newblock arXiv:1906.09171, 2019.

\bibitem[OS22]{OrtegaScarparo}
Eduard Ortega and Eduardo Scarparo.
\newblock Almost finiteness and homology of certain non-free actions.
\newblock {\em Groups Geom. Dyn.}, 2022.

\bibitem[PT16]{PetersonThom}
Jesse Peterson and Andreas Thom.
\newblock {Character rigidity for special linear groups.}
\newblock {\em {J. Reine Angew. Math.}}, 716:207--228, 2016.

\bibitem[SZ94]{StuckZimmer}
Garrett~J. Stuck and Robert~J. Zimmer.
\newblock {Stabilizers for ergodic actions of higher rank semisimple groups}.
\newblock {\em {Ann. Math. (2)}}, 139(3):723--747, 1994.

\bibitem[Tu99]{Tu}
Jean-Louis Tu.
\newblock The {Baum}-{Connes} conjecture for amenable foliations.
\newblock {\em \(K\)-Theory}, 17(3):215--264, 1999.

\bibitem[TW13]{TW}
Andrew~S. Toms and Wilhelm Winter.
\newblock Minimal dynamics and {{\(K\)}}-theoretic rigidity: {Elliott}'s
  conjecture.
\newblock {\em Geom. Funct. Anal.}, 23(1):467--481, 2013.

\bibitem[Ver12]{Vershik}
Anatoly~M. Vershik.
\newblock {Totally nonfree actions and the infinite symmetric group}.
\newblock {\em {Mosc. Math. J.}}, 12(1):193--212, 2012.

\bibitem[Zhe19]{Zheng}
Tianyi Zheng.
\newblock On rigid stabilizers and invariant random subgroups of groups of
  homeomorphisms.
\newblock arXiv:1901.04428, 2019.

\end{thebibliography}

\end{document}